% SIAM Article Template
\documentclass[final,onefignum,onetabnum]{siamart190516}

% Information that is shared between the article and the supplement
% (title and author information, macros, packages, etc.) goes into
% ex_shared.tex. If there is no supplement, this file can be included
% directly.

%\input{ex_shared}

%%%%%%----------Title informations

% SIAM Shared Information Template
% This is information that is shared between the main document and any
% supplement. If no supplement is required, then this information can
% be included directly in the main document.

% Packages and macros go here
\usepackage{amsmath}
\usepackage{amsfonts}
\usepackage{amssymb}
\usepackage{latexsym}
\usepackage[margin=1in]{geometry}
\usepackage{graphicx}
\usepackage{xcolor}
\usepackage{epstopdf}
\usepackage{tikz}

\numberwithin{equation}{section}

\def\curl{\operatorname{curl}} % curl
\def\i{\operatorname{\mathbf{i}}} % imag
\def\H{\mathsf{H}} % Sobolev space
\def\TH{\mathsf{TH}} % trace space
\def\Vol{\operatorname{Vol}} % volume

\ifpdf
  \DeclareGraphicsExtensions{.eps,.pdf,.png,.jpg}
\else
  \DeclareGraphicsExtensions{.eps}
\fi

% Add a serial/Oxford comma by default.

% Used for creating new theorem and remark environments
\newsiamremark{remark}{Remark}
\newsiamremark{hypothesis}{Hypothesis}
\crefname{hypothesis}{Hypothesis}{Hypotheses}
\newsiamthm{claim}{Claim}

% Sets running headers as well as PDF title and authors
\headers{A linearised inverse conductivity problem}
{V. Isakov, S. Lu, and B. Xu}

% Title. If the supplement option is on, then "Supplementary Material"
% is automatically inserted before the title.
\title{A linearised inverse conductivity problem for the Maxwell system at a high frequency
\thanks{Submitted to the editors DATE.
\funding{
V. Isakov is supported in part by the Emylou Keith and Betty Dutcher Distinguished Professorship and the NSF grants DMS 15-14886 and DMS 20-08154. 
S. Lu is supported by NSFC (No.11925104), Program of Shanghai Academic/Technology Research Leader (19XD1420500) and National Key Research and Development Program of China (No. 2017YFC1404103). 
B. Xu is supported by NSFC (No.11801351) and the Shanghai Pujiang Program (18PJ1403600).}}}

% Authors: full names plus addresses.
\author{
Victor Isakov
\thanks{Department of Mathematics, Statistics, and Physics, Wichita State University, Wichita, KS 67260-0033, USA 
(\email{victor.isakov@wichita.edu}).}
\and
Shuai Lu
\thanks{LMNS, SKLCAM, School of Mathematical Sciences, Fudan University, No.220 Road Handan, Shanghai 200433, China 
(\email{slu@fudan.edu.cn}).}
\and
Boxi Xu
\thanks{Corresponding author. School of Mathematics, Shanghai University of Finance and Economics, No.777 Road Guoding, Shanghai 200433, China 
(\email{xu.boxi@mail.sufe.edu.cn}).}
}

\usepackage{amsopn}

%%%%%%----------End title informations

% Optional PDF information
\ifpdf
\hypersetup{
  pdftitle={A linearised inverse conductivity problem for the Maxwell system at a high frequency},
  pdfauthor={V. Isakov, S. Lu, and B. Xu}
}
\fi

% The next statement enables references to information in the
% supplement. See the xr-hyperref package for details.

%\externaldocument{ex_supplement}

% FundRef data to be entered by SIAM
%<funding-group specific-use="FundRef">
%<award-group>
%<funding-source>
%<named-content content-type="funder-name"> 
%</named-content> 
%<named-content content-type="funder-identifier"> 
%</named-content>
%</funding-source>
%<award-id> </award-id>
%</award-group>
%</funding-group>

% ===== ===== ===== ===== ===== ===== ===== =====

\begin{document}

\maketitle

% REQUIRED
\begin{abstract}
We consider a linearised inverse conductivity problem for electromagnetic waves in a three dimensional bounded domain at a high time-harmonic frequency. Increasing stability bounds for the conductivity coefficient in the full Maxwell system and in a simplified transverse electric mode are derived. These bounds contain a Lipschitz term with a factor growing polynomially in terms of the frequency, a H\"{o}lder term, and a logarithmic term which decays with respect to the frequency as a power. To validate this increasing stability numerically, we propose a reconstruction algorithm aiming at the recovery of sufficiently many Fourier modes of the conductivity. A numerical evidence sheds light on the influence of the growing frequency and confirms the improved resolution at higher frequencies.
\end{abstract}

% REQUIRED
\begin{keywords}
Stability estimate, Inverse conductivity problem, Maxwell system
\end{keywords}

% REQUIRED
\begin{AMS}
35R30, 65N21
\end{AMS}

% ===== ===== ===== ===== ===== ===== ===== =====

\section{Introduction}\label{se_intro}

The stationary electromagnetic field $(E,H)$ satisfies the Maxwell system
\begin{align}
\label{system1}
\curl E - \i \omega \mu_{0} H = 0, \quad
\curl H +\i \omega \epsilon_{0} E = \sigma E,% \quad \textrm{in\ } \Omega \subset \mathbb{R}^{3},
\end{align}
in the medium $\Omega \subset \mathbb{R}^{3}$ with the electric permittivity $\epsilon_{0}$, the magnetic permeability $\mu_{0}$ and the conductivity coefficient $\sigma \in L^{\infty}(\Omega)$.
We aim at recovery of the conductivity $\sigma$ from measurements of all possible electromagnetic boundary data or the boundary mapping
\begin{align}
\label{boundarymap}
\Lambda: E \times \nu |_{\partial\Omega} \mapsto H \times \nu |_{\partial\Omega},
\end{align}
when the time-harmonic frequency $\omega$ of stationary waves is assumed to be large and both $\epsilon_{0}$, $\mu_{0}$ are assumed to be constants. Since such an inverse boundary value problem is exponentially ill-posed, highly non-linear and moreover non-convex, we shall simplify the original inverse problem into a linearised one, which is numerically feasible.

Investigation of the inverse conductivity problem goes back to 1980s when the elliptic equation arising in the electrical impedance tomography was transformed into a Schr\"{o}dinger equation
\begin{align*}
(-\Delta + c) u = 0 \quad \textrm{in\ } \Omega.
\end{align*}
The global uniqueness of the Schr\"{o}dinger potential $c(x)$ from the Dirichlet-to-Neumann map in three and higher dimensions was derived by constructing complex geometrical optics solutions in \cite{SU1987}. Later on, the stability estimate for this inverse problem is demonstrated to be of a logarithmic type in \cite{A1988}, \cite{M2001}. Recently, a series of papers showed that the stability estimate improves when one considers the Schr\"{o}dinger equation
\begin{align*}
(-\Delta - k^{2} + \i k b + c) u = 0 \quad \textrm{in\ } \Omega
\end{align*}
with a large wave number $k$, an attenuation constant $b$ and a potential function $c(x)$. If $b = 0$, one obtained an increasing stability estimate in the inverse Schr\"{o}dinger potential problem within different ranges of the wave numbers in \cite{I2011}. If the constant $b$ is greater than $0$, then the results in \cite{IW2014} show that the increasing stability involves a linearly exponential dependence on the attenuation constant. We note that the above mentioned increasing stability holds true in three and higher dimensions under different a priori regularity assumptions. To numerically reconstruct the potential function $c(x)$ in two and higher dimensions the authors of current work have considered a linearised inverse Schr\"{o}dinger potential problem in \cite{ILX2018} whose increasing stability is also derived theoretically and verified numerically.

In \cite{SIC1992}, a linearised problem of identification of $\epsilon_0$, $\mu_0$, $\sigma$ in the Maxwell system \eqref{system1} from all possible boundary measurements was proposed. The first global uniqueness result for all these electromagnetic parameters of an isotropic medium is obtained in \cite{OPS1993} by the admittance or impedance mappings on the boundary. Meanwhile, the uniqueness and a logarithmic stability estimate hold true if one considers a Cauchy data set on the full or partial boundary as shown in \cite{C2010,C2011}. An increasing stability estimate of the conductivity $\sigma$ in the Maxwell equation \eqref{system1} is proven in \cite{ILW2016} where the Cauchy data set on the full boundary is used. Despite the above theoretical results, numerical schemes reconstructing the electromagnetic parameters are not well developed.

Out of two features of the inverse conductivity problem, an exponential instability seems to be more difficult for an analysis and more damaging for numerical reconstruction. Since non-convexity produces additional difficulties, in our opinion linearisation should help to understand better stability for conductivity function $\sigma$. We note that a linearisation approach has been proposed in \cite{SIC1992}, but no numerical examples are reported there. In the current work we consider a linearised inverse conductivity problem for the Maxwell system \eqref{system1} and investigate the increasing stability of the linearised inverse problem with respect to the growing frequency $\omega$. Moreover, we propose a Fourier based reconstruction algorithm aiming at recovering the dominating Fourier modes of the conductivity $\sigma$ for higher frequencies $\omega$. We shall mention that numerical algorithms on reconstructing the non-constant medium coefficient of the Maxwell system have been proposed in \cite{BL2005,BL2009} where varying polarized or plane incident waves are used at a fixed frequency.  We take a similar approach for the inverse conductivity problem of the Maxwell system \eqref{system1} in the current work.

The paper is organized as follows. The main increasing stability estimates for linearised inverse conductivity problems of the full Maxwell system and a simplified Transverse Electric (TE) mode are derived in Section \ref{se_main}. To obtain these estimates, we construct appropriate complex exponential (CE) solutions for the Maxwell system with the constant electric permittivity $\epsilon_{0}$ and magnetic permeability $\mu_{0}$. By using these CE solutions, increasing stability estimates are derived which contain a Lipschitz part with the factor growing polynomially in $\omega$, a H\"{o}lder part, and a logarithmic part which polynomially decays in $\omega$. These estimates are explicit, i.e. which do not contain unknown constants, like in \cite{ILW2016}. A Fourier-based reconstruction algorithm is introduced in Section \ref{se_numer} aiming at the recovery of sufficiently many Fourier modes of the unknown conductivity function $\sigma$ for a high frequency $\omega$. Numerical examples confirm the efficiency of the proposed algorithm and numerically verify the improving resolution of the reconstructed conductivity when the frequency grows. Moreover, in numerical solution of the linearised inverse problem the data from the original inverse problem are used, and for higher frequencies a very good approximation of $\sigma$ is obtained.

% ===== ===== ===== ===== ===== ===== ===== =====

\section{Stability estimates}\label{se_main}

\subsection{Setups}\label{se_main_basic}

In the Maxwell system \eqref{system1} we assume that both electrical permittivity $\epsilon_{0}$ and magnetic permeability $\mu_{0}$ are positive constants and the conductivity $\sigma \in L^{\infty}(\Omega)$. The wave number $k$ is defined by $k := \omega \sqrt{\epsilon_{0} \mu_{0}}$.

The domain $\Omega$ is assumed to be bounded with the boundary $\partial\Omega \in C^{2}$. $\H^{\ell}(\Omega)$ denotes the standard Sobolev space with the norm $\|\cdot\|_{(\ell)}(\Omega)$. The function space $\H(\Omega;\curl)$ is defined by $\H(\Omega;\curl) = \{ \mathbf{u} : \mathbf{u} \in \H^{0}(\Omega), \curl \mathbf{u} \in \H^{0}(\Omega) \}$ with the standard norm. We recall that $\TH(\partial\Omega)$ is the space of traces of functions from $\H(\Omega;\curl)$ with the norm $\|\mathbf{g}\|_{\TH(\partial\Omega)} = \inf\big( \|\mathbf{u}\|_{(0)}(\Omega) + \|\curl\mathbf{u}\|_{(0)}(\Omega) \big)$ over all $\mathbf{u} \in \H(\Omega;\curl)$ such that $\mathbf{u} \times \nu = \mathbf{g}$ on $\partial\Omega$ and $\nu$ is the outward unit normal vector of $\partial\Omega$.

Under above assumptions and notations, it is known that there is a unique solution $(E,H) \in (\H(\Omega;\curl))^{2}$ of \eqref{system1} and the boundary condition is
\begin{align*}
E \times \nu = \mathbf{g} \quad \textrm{on\ } \partial\Omega,
\end{align*}
provided that $\mathbf{g} \in \TH(\partial\Omega)$ and $\omega$ is not an eigenvalue.

We expect that if the conductivity $\sigma$ is small or the frequency $\omega$ is sufficiently large, the solution $(E,H)$ of \eqref{system1} has the form
\begin{align*}
E = E( ;0) + E( ;1) + E( ;2), \quad
H = H( ;0) + H( ;1) + H( ;2),
\end{align*}
where $(E( ;\ell), H( ;\ell))$, $\ell = 0,1$ are solutions of the boundary value problems below and $(E( ;2),H( ;2))$ are higher-order terms with respect to the conductivity $\sigma$. The solution $(E( ;0),H( ;0))$ satisfies the unperturbed problem
\begin{align}
\label{system2}
\curl E( ;0) - \i \omega \mu_{0} H( ;0) = 0, \quad
\curl H( ;0) + \i \omega \epsilon_{0} E( ;0) = 0 \quad \textrm{in\ } \Omega,
\end{align}
and the boundary condition $E( ;0) \times \nu = \mathbf{g}$ on $\partial\Omega$.
Meanwhile, the solution $(E( ;1),H( ;1))$ satisfies
\begin{align}
\label{system3}
\curl E( ;1) - \i \omega \mu_{0} H( ;1) = 0, \quad
\curl H( ;1) + \i \omega \epsilon_{0} E( ;1) = \sigma E( ;0) \quad \textrm{in\ } \Omega,
\end{align}
and the boundary condition
\begin{align}
\label{boundary3}
E( ;1) \times \nu = 0 \quad \textrm{on\ } \partial\Omega.
\end{align}
The remaining higher-order terms would be small if the conductivity is small, for instance, $\|E( ;2)\|_{(0)}(\Omega) \leq C \|\sigma\|^{1+\eta}_{L^{\infty}(\Omega)}$ in \cite{SIC1992} for some $\eta \in (0,1)$.

Similarly to \cite{SIC1992}, the linearised inverse problem to be considered in current work is to {\bf recover the conductivity $\sigma$ given the linearised boundary mapping}
\begin{align}
\label{linearmap}
\Lambda' : E( ;0) \times \nu |_{\partial\Omega} \mapsto H( ;1) \times \nu |_{\partial\Omega}.
\end{align}

To proceed further we introduce an "adjoint" solution $(E^{*}( ;0), H^{*}( ;0)) \in (\H(\Omega;\curl))^{2}$ of the unperturbed system \eqref{system2}, i.e.,
\begin{align}
\label{system2*}
\curl E^{*}( ;0) - \i \omega \mu_{0} H^{*}( ;0) = 0, \quad
\curl H^{*}( ;0) + \i \omega \epsilon_{0} E^{*}( ;0) = 0 \quad \textrm{in\ } \Omega.
\end{align}

By the well-known identity
\begin{align}
\label{equality}
\int_{\Omega} ( \mathbf{v} \cdot \curl \mathbf{w} - \curl \mathbf{v} \cdot \mathbf{w} ) \,\mathrm{d}x \,
= \int_{\partial\Omega} ( \mathbf{v} \times \nu ) \cdot \mathbf{w} \,\mathrm{d}S,
\end{align}
with $\mathbf{v} = H( ;1)$, $\mathbf{w} = E^{*}( ;0)$ and $\mathbf{v} = E( ;1)$, $\mathbf{w} = H^{*}( ;0)$ respectively, we obtain
\begin{align*}
\begin{aligned}
&\int_{\Omega} \big( H( ;1) \cdot \curl E^{*}( ;0) - \curl H( ;1) \cdot E^{*}( ;0)
+ E( ;1) \cdot \curl H^{*}( ;0) - \curl E( ;1) \cdot H^{*}( ;0) \big) \,\mathrm{d}x \\
&= \int_{\partial\Omega} \big( ( H( ;1) \times \nu ) \cdot E^{*}( ;0)
+ ( E( ;1) \times \nu ) \cdot H^{*}( ;0) \big) \,\mathrm{d}S.
\end{aligned}
\end{align*}
By using \eqref{system3}, \eqref{boundary3}, \eqref{system2*} the above equality further reduces to
\begin{align}
\label{hatsigma}
\int_{\Omega} \sigma E( ;0) \cdot E^{*}( ;0) \,\mathrm{d}x \,
= - \int_{\partial\Omega} ( H( ;1) \times \nu ) \cdot E^{*}( ;0) \,\mathrm{d}S
\end{align}
which plays a fundamental role in this work.

% ===== ===== ===== ===== ===== ===== ===== =====

\subsection{Increasing stability for the linearised inverse conductivity problem}\label{se_main_stability}

Before we proceed further, some additional notations are introduced. Here and in what follows, we let $\varepsilon > 0$ be the operator norm of the linearised operator $\Lambda'$ in \eqref{linearmap} from $\TH(\partial\Omega)$ to $\TH(\partial\Omega)$, $D = \sup |x-y|$ with $x,y \in \Omega$ be the diameter of $\Omega$, $\Vol(\Omega)$ be the volume of $\Omega$ and $\Vol_{2}(\Omega) = \sup \Vol_{2}(\Omega')$ over all $2$-dimensional orthogonal projection $\Omega'$ of $\Omega$. We will extend $\sigma$ onto $\mathbb{R}^{3} \setminus \Omega$ as zero and recall that the Fourier transform
\begin{equation*}
\widehat{\sigma}(\xi) = (2\pi)^{-\frac{3}{2}} \int_{\mathbb{R}^{3}} \sigma(x) \, e^{-\i \xi \cdot x} \,\mathrm{d}x.
\end{equation*}

The main increasing stability estimate for the linearised inverse conductivity problem is presented below.

\begin{theorem}
\label{thm1}
Let $\|\sigma\|_{(1)}(\Omega) \leq M_{1}$, $\sigma = 0$ on $\partial\Omega$, and the wave number $k \geq 1$. Furthermore, let $0 < \alpha \leq 1$ and $\varepsilon < 1$, then
\begin{equation}
\label{bound1}
\begin{aligned}
\|\sigma\|^{2}_{(0)}(\Omega)
&\leq \frac{4}{3\pi^{2}} (\Vol(\Omega))^{2} \Big( (1-\chi_{k}(\mathcal{E})) (1+k)^{4} k^{3\alpha}
+ \chi_{k}(\mathcal{E}) (1+\mathcal{E})^{4} \mathcal{E}^{3} \Big) \varepsilon^{2} \\
&\qquad + \chi_{k}(\mathcal{E}) \frac{1}{4\pi^{2}} (\Vol_{2}(\Omega))^{2} \frac{1}{D} \Big(1+(2D)^{2}\Big)^{1/2} (1+\mathcal{E})^{4} \mathcal{E} \varepsilon \\
&\qquad + M_{1}^{2} \Big( (1-\chi_{k}(\mathcal{E})) \frac{1}{1+2\mathcal{E}^{2\alpha}+2k^{2\alpha}}
+ \chi_{k}(\mathcal{E}) \frac{1}{1+\frac{\mathcal{E}^{2}}{D^{2}}+4k^{2}} \Big).
\end{aligned}
\end{equation}
Here $\mathcal{E} = -\ln\varepsilon$ and $\chi_{k}(\mathcal{E}) = 0$ if $\mathcal{E} < k$ and $\chi_{k}(\mathcal{E}) = 1$ if $k \leq \mathcal{E}$.
\end{theorem}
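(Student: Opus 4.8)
The plan is to convert the operator‑norm bound on $\Lambda'$ into a pointwise bound on $\widehat\sigma(\xi)$ for $\xi$ in a ball of radius $\rho$ by feeding complex exponential (CE) solutions of the unperturbed system into the fundamental identity \eqref{hatsigma}, and then to recover $\|\sigma\|_{(0)}(\Omega)$ by Plancherel, splitting the frequency integral at $\rho$ and absorbing the tail with the a priori bound $M_1$ (the hypothesis $\sigma=0$ on $\partial\Omega$ is used so that the zero extension keeps the same $\H^1$-norm). For $\xi\in\mathbb R^3$ I would choose $\zeta,\zeta^*\in\mathbb C^3$ with $\zeta\cdot\zeta=\zeta^*\cdot\zeta^*=k^2$, $\zeta+\zeta^*=-\xi$, and a unit polarization vector $\eta$ with $\eta\cdot\zeta=\eta\cdot\zeta^*=0$ and $\eta\cdot\eta=1$, and set $E(;0)=\eta\,e^{\i\zeta\cdot x}$, $H(;0)=(\omega\mu_0)^{-1}\zeta\times\eta\,e^{\i\zeta\cdot x}$ and likewise $E^*(;0)=\eta\,e^{\i\zeta^*\cdot x}$, $H^*(;0)=(\omega\mu_0)^{-1}\zeta^*\times\eta\,e^{\i\zeta^*\cdot x}$. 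A direct check, using $k^2=\omega^2\epsilon_0\mu_0$, shows these lie in $(\H(\Omega;\curl))^2$, solve \eqref{system2} and \eqref{system2*}, and satisfy $E(;0)\cdot E^*(;0)=e^{-\i\xi\cdot x}$. I distinguish a \emph{propagating} regime $|\xi|\le 2k$, where $\zeta,\zeta^*$ may be taken real so $|E(;0)|=|E^*(;0)|=1$, and an \emph{evanescent} regime $|\xi|>2k$, where $\zeta=-\xi/2+\i\tau\omega$ with $\omega\perp\xi$ a unit vector and $\tau$ a parameter bounded below by $(|\xi|^2/4-k^2)^{1/2}$, so $|e^{\i\zeta\cdot x}|=e^{-\tau\,\omega\cdot x}$; the degenerate cases $\xi=0$ and $|\xi|=2k$ need only a separate choice of $\eta$.

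Inserting these solutions into \eqref{hatsigma}, the left side equals $(2\pi)^{3/2}\widehat\sigma(\xi)$, while the right side is, by the duality between $\TH(\partial\Omega)$ and $\H(\Omega;\curl)$ furnished by \eqref{equality} together with the definition of $\varepsilon$ as the norm of $\Lambda'$, at most $\varepsilon\big(\|E(;0)\|_{(0)}(\Omega)+\|\curl E(;0)\|_{(0)}(\Omega)\big)\big(\|E^*(;0)\|_{(0)}(\Omega)+\|\curl E^*(;0)\|_{(0)}(\Omega)\big)$. In the propagating regime each factor is $\le(1+k)(\Vol(\Omega))^{1/2}$ (using $|\zeta|=k$), so $|\widehat\sigma(\xi)|^2\le(2\pi)^{-3}(1+k)^4(\Vol(\Omega))^2\varepsilon^2$ for $|\xi|\le 2k$. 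In the evanescent regime, $\|\curl E(;0)\|_{(0)}(\Omega)=|\zeta\times\eta|\,\|E(;0)\|_{(0)}(\Omega)$ with $|\zeta\times\eta|\le|\xi|/\sqrt2$, and by Fubini over the two-dimensional projection of $\Omega$ plus a one-dimensional exponential integral one gets $\|E(;0)\|_{(0)}(\Omega)\|E^*(;0)\|_{(0)}(\Omega)\le\Vol_2(\Omega)\,(2\tau)^{-1}e^{\tau D}$; combining these gives $|\widehat\sigma(\xi)|^2\le(2\pi)^{-3}\big(1+|\xi|/\sqrt2\big)^4(\Vol_2(\Omega))^2(2\tau)^{-2}e^{2\tau D}\varepsilon^2$.

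Now write $\|\sigma\|^2_{(0)}(\Omega)=\int_{|\xi|\le\rho}|\widehat\sigma|^2\,\mathrm dx+\int_{|\xi|>\rho}|\widehat\sigma|^2\,\mathrm dx$, bounding the tail by $(1+\rho^2)^{-1}\|\sigma\|^2_{(1)}(\Omega)\le M_1^2/(1+\rho^2)$. If $\mathcal E=-\ln\varepsilon<k$, take $\rho=\sqrt2\,(k^{2\alpha}+\mathcal E^{2\alpha})^{1/2}\le 2k^\alpha\le 2k$, stay in the propagating regime, integrate the bound of the previous step over the ball (its volume producing the $k^{3\alpha}$ factor), and obtain the Lipschitz term $\tfrac{4}{3\pi^2}(\Vol(\Omega))^2(1+k)^4k^{3\alpha}\varepsilon^2$ and the H\"older remainder $M_1^2/(1+2k^{2\alpha}+2\mathcal E^{2\alpha})$. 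If $\mathcal E\ge k$, take $\rho=(4k^2+\mathcal E^2/D^2)^{1/2}$, so that on the evanescent shell $2k\le|\xi|\le\rho$ one has $\tau D\le\mathcal E/2$, hence $e^{2\tau D}\varepsilon^2\le e^{\mathcal E}\varepsilon^2=\varepsilon$; the shell contribution then becomes a term \emph{linear} in $\varepsilon$ carrying the factors $\Vol_2(\Omega)$, $1/D$ and $(1+(2D)^2)^{1/2}$, while the propagating ball $|\xi|\le 2k$ still yields an $\varepsilon^2$-term dominated (since $k\le\mathcal E$) by $\tfrac{4}{3\pi^2}(\Vol(\Omega))^2(1+\mathcal E)^4\mathcal E^3\varepsilon^2$, and the tail becomes the logarithmic remainder $M_1^2/(1+4k^2+\mathcal E^2/D^2)$. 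Combining the two branches with $\chi_k(\mathcal E)$ and collecting the numerical constants yields \eqref{bound1}.

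The delicate part is to carry everything through with fully explicit constants. In the evanescent regime the naive bound $\propto\tau^{-2}$ is not integrable near $|\xi|=2k$, so one must either treat $\tau$ as a genuine free parameter, choosing its value across the shell so the spherical integral converges, or peel off a thin layer around $|\xi|=2k$ and estimate it separately; one must also keep precise control of the one-dimensional exponential integral, of the spherical-coordinate integration over the shell, and of the polarization vector $\eta$ (verifying $\eta\cdot\eta=1$ with $|\eta|$ bounded in all cases, including the degenerate ones), so that the coefficients $\tfrac{4}{3\pi^2}$, $\tfrac{1}{4\pi^2}$ and the denominators $1+2k^{2\alpha}+2\mathcal E^{2\alpha}$ and $1+4k^2+\mathcal E^2/D^2$ come out exactly as stated. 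The algebraic reduction to \eqref{hatsigma} and the duality estimate are, by contrast, routine.
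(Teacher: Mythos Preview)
Your overall architecture is exactly that of the paper: build CE plane-wave solutions of the unperturbed system, plug them into \eqref{hatsigma} to get a pointwise bound on $\widehat\sigma(\xi)$, split into propagating ($|\xi|\le 2k$) and evanescent ($|\xi|>2k$) regimes, and then choose the cut-off radius $\rho$ according to whether $\mathcal E<k$ or $\mathcal E\ge k$. The duality estimate and the treatment of the propagating ball match the paper essentially verbatim; in case~(a) the paper simply takes $\rho=2k^\alpha$ and then uses $\mathcal E<k$ to rewrite $1+4k^{2\alpha}\ge 1+2k^{2\alpha}+2\mathcal E^{2\alpha}$, rather than your slightly fancier $\rho=\sqrt2\,(k^{2\alpha}+\mathcal E^{2\alpha})^{1/2}$.

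Where you diverge is precisely the place you flag as ``delicate'': your bound $\|E(;0)\|_{(0)}\|E^*(;0)\|_{(0)}\le \Vol_2(\Omega)\,(2\tau)^{-1}e^{\tau D}$ throws away the $-e^{-\tau D}$ term, which is what produces the spurious non-integrable factor $\tau^{-2}$ near $|\xi|=2k$. The paper never makes this overestimate. It keeps the exact one-dimensional integral
\[
\int_{-D/2}^{D/2} e^{\pm\Xi t}\,\mathrm dt=\frac{e^{D\Xi/2}-e^{-D\Xi/2}}{\Xi}
\qquad(\Xi=\sqrt{|\xi|^2-4k^2}=2\tau),
\]
observes that $y\mapsto (e^{y}-e^{-y})/y$ is increasing on $(0,\infty)$ and bounded at $0$, and hence bounds this quantity \emph{uniformly on the whole shell} $2k<|\xi|\le\rho$ by its value at the outer radius, namely $D\,(e^{\mathcal E/2}-e^{-\mathcal E/2})/\mathcal E$. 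After squaring and multiplying by $\varepsilon^2=e^{-2\mathcal E}$ this is $\le \varepsilon$, and the shell volume $\frac{4\pi}{3}(\rho^3-(2k)^3)$ is handled by the mean-value inequality $(1+t)^{3/2}-t^{3/2}\le\frac32(1+t)^{1/2}$. So there is no need to treat $\tau$ as a free parameter, peel off a layer near $|\xi|=2k$, or do anything else: the constants $\tfrac{4}{3\pi^2}$ and $\tfrac{1}{4\pi^2}$ and the factor $(1+(2D)^2)^{1/2}$ fall out directly. Your worry about the polarization $\eta$ is likewise unnecessary: since $\zeta$ and $\zeta^*$ both lie in the real $e_1$--$e_2$ plane, taking $\eta=e_3$ (a real unit vector) works for every $\xi\neq0$, with no degenerate cases to treat.
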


\begin{remark}
Since in the dimension three $\Vol(\Omega) \leq \frac{\pi D^{3}}{6}$, $\Vol_{2}(\Omega) \leq \frac{\pi D^{2}}{4}$, then the bound \eqref{bound1} implies that
\begin{equation*}
\begin{aligned}
\|\sigma\|^{2}_{(0)}(\Omega)
&\leq \frac{D^{6}}{3^{3}} \Big( (1-\chi_{k}(\mathcal{E})) (1+k)^{4} k^{3\alpha}
+ \chi_{k}(\mathcal{E}) (1+\mathcal{E})^{4} \mathcal{E}^{3} \Big) \varepsilon^{2} \\
&\qquad + \chi_{k}(\mathcal{E}) \frac{D^{3}}{2^{6}} \big(1+(2D)^{2}\big)^{1/2} (1+\mathcal{E})^{4} \mathcal{E} \varepsilon \\
&\qquad + M_{1}^{2} \Big( (1-\chi_{k}(\mathcal{E})) \frac{1}{1+2\mathcal{E}^{2\alpha}+2k^{2\alpha}}
+ \chi_{k}(\mathcal{E}) \frac{1}{1+\frac{\mathcal{E}^{2}}{D^{2}}+4k^{2}} \Big).
\end{aligned}
\end{equation*}
\end{remark}

\begin{proof}
Due to the translational invariance we can assume that the origin $0 \in \Omega$ and $D = 2\sup |x|$, $x \in \Omega$.

We construct CE solutions
\begin{equation}
\label{complexsol}
E(x ;0) = a e^{\i \zeta \cdot x}, \quad
H(x ;0) = b e^{\i \zeta \cdot x}, \quad
E^{*}(x ;0) = a^{*} e^{\i \zeta^{*} \cdot x}, \quad
H^{*}(x ;0) = b^{*} e^{\i \zeta^{*} \cdot x},
\end{equation}
with complex valued vectors $a$, $b$, $a^{*}$, $b^{*}$, $\zeta$, $\zeta^{*}$, which yields
\begin{equation}
\label{curl_complexsol}
\begin{aligned}
&\curl E(x ;0) = \i e^{\i \zeta \cdot x} \zeta \times a, &\quad
&\curl H(x ;0) = \i e^{\i \zeta \cdot x} \zeta \times b, \\
&\curl E^{*}(x ;0) = \i e^{\i \zeta^{*} \cdot x} \zeta^{*} \times a^{*}, &\quad
&\curl H^{*}(x ;0) = \i e^{\i \zeta^{*} \cdot x} \zeta^{*} \times b^{*}.
\end{aligned}
\end{equation}
Then these solutions in \eqref{complexsol} solve the unperturbed Maxwell systems \eqref{system2} and \eqref{system2*} if and only if
\begin{equation}
\label{ab}
\begin{aligned}
&\zeta \times a = \omega \mu_{0} b, \quad
\zeta \times b = -\omega \epsilon_{0} a, \quad
\zeta^{*} \times a^{*} = \omega \mu_{0} b^{*}, \quad
\zeta^{*} \times b^{*} = -\omega \epsilon_{0} a^{*}, \\
&\zeta \cdot \zeta = \zeta^{*} \cdot \zeta^{*} = k^{2} = \omega^{2} \epsilon_{0} \mu_{0}.
\end{aligned}
\end{equation}

Letting $\xi \in \mathbb{R}^{3} \setminus \{0\}$, and $\{ e_{1} = \frac{\xi}{|\xi|}, e_{2}, e_{3} \}$ be an orthonormal basis in $\mathbb{R}^{3}$, we choose
\begin{equation}
\label{ab_choice}
\begin{aligned}
&\zeta = \frac{|\xi|}{2} e_{1} + \sqrt{k^{2}-\frac{|\xi|^{2}}{4}} e_{2}, &\quad
&\zeta^{*} = \frac{|\xi|}{2} e_{1} - \sqrt{k^{2}-\frac{|\xi|^{2}}{4}} e_{2}, \\
&a = e_{3}, \quad
b = \frac{-1}{\omega \mu_{0}} \Big( \frac{|\xi|}{2} e_{2} - \sqrt{k^{2}-\frac{|\xi|^{2}}{4}} e_{1} \Big), &\quad
&a^{*} = -e_{3}, \quad
b^{*} = \frac{1}{\omega \mu_0} \Big( \frac{|\xi|}{2} e_{2} + \sqrt{k^{2}-\frac{|\xi|^{2}}{4}} e_{1} \Big),
\end{aligned}
\end{equation}
where $\sqrt{-y} = \i \sqrt{y}$ when $y > 0$. One can easily verify that \eqref{ab_choice} implies \eqref{ab}. Thus the CE solutions defined in \eqref{complexsol} and given the parameter set (\ref{ab_choice}) satisfy \eqref{system2} and \eqref{system2*}.

Recalling the equality \eqref{hatsigma} and inserting the CE solutions in \eqref{complexsol} with the parameter set in \eqref{ab_choice}, we yield
\begin{equation*}
(2\pi)^{\frac{3}{2}} \widehat{\sigma}(-\xi)
= - \int_{\Omega} \sigma E( ;0) \cdot E^{*}( ;0) \,\mathrm{d}x \,
= \int_{\partial\Omega} ( H( ;1) \times \nu ) \cdot E^{*}( ;0) \,\mathrm{d}S.
\end{equation*}
In the above equality, CE solutions $E( ;0)$, $E^{*}( ;0)$ solve \eqref{system2}, \eqref{system2*} and $H( ;1)$ satisfies \eqref{system3}-\eqref{boundary3}. Hence
\begin{equation*}
(2\pi)^{\frac{3}{2}} \left| \widehat{\sigma}(-\xi) \right|
= \left| \int_{\partial\Omega} ( H( ;1) \times \nu ) \cdot E^{*}( ;0) \,\mathrm{d}S \right|
= \left| \int_{\Omega} \big( H(\delta) \cdot \curl E^{*}( ;0) - \curl H(\delta) \cdot E^{*}( ;0) \big) \,\mathrm{d}x \, \right|
\end{equation*}
due to \eqref{equality}, provided $H(\delta) \times \nu = H( ;1) \times \nu$ on $\partial\Omega$.

By the definition of $\Lambda'$ in \eqref{linearmap}, we have
\begin{equation*}
\|H( ;1) \times \nu\|_{\TH(\partial\Omega)} \leq \varepsilon \|E( ;0) \times \nu\|_{\TH(\partial\Omega)}.
\end{equation*}
Using the definition of the norm in $\TH(\partial\Omega)$, for any $\delta > 0$, there exists $H(\delta)$ such that
\begin{equation}
\label{Hnorm}
\|H(\delta)\|_{(0)}(\Omega) + \|\curl H(\delta)\|_{(0)}(\Omega)
\leq \|H( ;1) \times \nu\|_{\TH(\partial\Omega)} + \delta
\leq \varepsilon \|E( ;0) \times \nu\|_{\TH(\partial\Omega)} + \delta.
\end{equation}
Now from \eqref{Hnorm}, we have
\begin{equation*}
\begin{aligned}
(2\pi)^{\frac{3}{2}} \left| \widehat{\sigma}(-\xi) \right|
&\leq \|H(\delta)\|_{(0)}(\Omega) \, \|\curl E^{*}( ;0)\|_{(0)}(\Omega) + \|\curl H(\delta)\|_{(0)}(\Omega) \, \|E^{*}( ;0)\|_{(0)}(\Omega) \\
%&\leq \big( \|H(\delta)\|_{(0)}(\Omega) + \|\curl H(\delta)\|_{(0)}(\Omega) \big) \big( \|E^{*}( ;0)\|_{(0)}(\Omega) + \|\curl E^{*}( ;0)\|_{(0)}(\Omega) \big) \\
&\leq \big( \varepsilon \|E( ;0) \times \nu\|_{\TH(\partial\Omega)} + \delta \big) \big( \|E^{*}( ;0)\|_{(0)}(\Omega) + \|\curl E^{*}( ;0)\|_{(0)}(\Omega) \big).
\end{aligned}
\end{equation*}
Since it holds for any positive $\delta$, by letting $\delta \to 0$ we derive
\begin{equation}
\label{hatsigmabound}
(2\pi)^{\frac{3}{2}} \left| \widehat{\sigma}(-\xi) \right|
\leq \varepsilon \|E( ;0) \times \nu\|_{\TH(\partial\Omega)} \big( \|E^{*}( ;0)\|_{(0)}(\Omega) + \|\curl E^{*}( ;0)\|_{(0)}(\Omega) \big).
\end{equation}

Using \eqref{complexsol}, \eqref{curl_complexsol} and \eqref{ab_choice}, we further obtain
\begin{equation}
\label{Enorm*}
\begin{aligned}
&\|E^{*}( ;0)\|^{2}_{(0)}(\Omega) \leq \Vol(\Omega),
& &\|\curl E^{*}( ;0)\|^{2}_{(0)}(\Omega) \leq k^{2} \Vol(\Omega),
& &\textrm{if\ } |\xi| \leq 2k, \\
&\|E^{*}( ;0)\|^{2}_{(0)}(\Omega) \leq \int_{\Omega} e^{\Xi\, e_{2} \cdot x} \,\mathrm{d}x,
& &\|\curl E^{*}( ;0)\|^{2}_{(0)}(\Omega) \leq k^{2} \int_{\Omega} e^{\Xi\, e_{2} \cdot x} \,\mathrm{d}x,
& &\textrm{if\ } 2k < |\xi|,
\end{aligned}
\end{equation}
where $\Xi = \sqrt{|\xi|^{2}-4k^{2}}$. Similarly,
\begin{equation}
\label{ETHnorm}
\begin{aligned}
&\|E( ;0) \times \nu\|_{\TH(\partial\Omega)}
\leq \|E( ;0)\|_{(0)}(\Omega) + \|\curl E( ;0)\|_{(0)}(\Omega)
\leq (1+k) (\Vol(\Omega))^{1/2},
& &\textrm{if\ } |\xi| \leq 2k, \\
&\|E( ;0) \times \nu\|_{\TH(\partial \Omega)}
\leq (1+k) \Big( \int_{\Omega} e^{-\Xi\, e_{2} \cdot x} \,\mathrm{d}x \Big)^{1/2},
& &\textrm{if\ } 2k < |\xi|.
\end{aligned}
\end{equation}

Now combining \eqref{hatsigmabound}, \eqref{Enorm*} and \eqref{ETHnorm} we derive
\begin{equation}
\label{hatxibound1}
(2\pi)^{\frac{3}{2}} \left| \widehat{\sigma}(-\xi) \right|
\leq \varepsilon \|E( ;0) \times \nu\|_{\TH(\partial\Omega)} (1+k) (\Vol(\Omega))^{1/2}
\leq \varepsilon (1+k)^{2} \Vol(\Omega), \quad
\textrm{if\ } |\xi| \leq 2k,
\end{equation}
and
\begin{equation}
\label{hatxibound2}
\begin{aligned}
(2\pi)^{\frac{3}{2}} \left| \widehat{\sigma}(-\xi) \right|
&\leq \varepsilon \|E( ;0) \times \nu\|_{\TH(\partial\Omega)} (1+k) \Big( \int_{\Omega} e^{\Xi\, e_{2} \cdot x} \,\mathrm{d}x \Big)^{1/2} \\
&\leq \varepsilon (1+k)^{2} \Big( \int_{\Omega} e^{-\Xi\, e_{2} \cdot x} \,\mathrm{d}x \Big)^{1/2} \Big( \int_{\Omega} e^{\Xi\, e_{2} \cdot x} \,\mathrm{d}x \Big)^{1/2} \\
&%\leq \varepsilon (1+k)^{2} \int_{\Omega} e^{-\Xi\, e_{2} \cdot x} \,\mathrm{d}x
\leq \varepsilon (1+k)^{2} \Vol_{2}(\Omega) \int_{-\frac{D}{2}}^{\frac{D}{2}} e^{-\Xi\, t} \,\mathrm{d}t,
& &\textrm{if\ } 2k < |\xi|.
\end{aligned}
\end{equation}

We first consider the case \textbf{a)}: $-\ln\varepsilon = \mathcal{E} < k$. By the Parseval identity
\begin{equation}
\label{sigmanorm_a}
\begin{aligned}
\|\sigma\|^{2}_{(0)}(\Omega)
&= \int_{|\xi| \leq 2k^{\alpha}} \left| \widehat{\sigma}(-\xi) \right|^{2} \,\mathrm{d}\xi
+ \int_{2k^{\alpha} < |\xi|} \left| \widehat{\sigma}(-\xi) \right|^{2} \,\mathrm{d}\xi \\
&\leq \frac{1}{(2\pi)^{3}} (\Vol(\Omega))^{2} (1+k)^{4} \frac{4\pi}{3} 8 k^{3\alpha} \varepsilon^{2}
+ \frac{M_{1}^{2}}{1+(2k^{\alpha})^{2}} \\
&\leq \frac{4}{3\pi^{2}} (\Vol(\Omega))^{2} (1+k)^{4} k^{3\alpha} \varepsilon^{2}
+ \frac{M_{1}^{2}}{1+2\mathcal{E}^{2\alpha}+2k^{2\alpha}}
\end{aligned}
\end{equation}
due to \eqref{hatxibound1} and the assumption that $\mathcal{E} < k$.

Now we handle the case \textbf{b)}: $k \leq \mathcal{E}$. Letting $\rho^{2} = \frac{\mathcal{E}^{2}}{D^{2}} + 4k^{2}$, since the function $\frac{e^{y}-e^{-y}}{y}$ is increasing when $y > 0$, we have
\begin{equation}
\label{Xiestimate}
\int_{-\frac{D}{2}}^{\frac{D}{2}} e^{-\Xi\, t} \,\mathrm{d}t
= \frac{e^{\frac{1}{2}D \Xi}-e^{-\frac{1}{2}D \Xi}}{\Xi} \leq D \frac{e^{\frac{\mathcal{E}}{2}}-e^{-\frac{\mathcal{E}}{2}}}{\mathcal{E}}
\end{equation}
when $2k < |\xi| \leq \rho$ (and hence $\Xi \leq \frac{\mathcal{E}}{D}$). As above, using \eqref{ETHnorm}, \eqref{hatxibound2}, \eqref{Xiestimate} we derive
\begin{equation*}
\begin{aligned}
\|\sigma\|^{2}_{(0)}(\Omega)
&= \int_{|\xi| \leq 2k} \left| \widehat{\sigma}(-\xi) \right|^{2} \,\mathrm{d}\xi
+ \int_{2k < |\xi| \leq \rho} \left| \widehat{\sigma}(-\xi) \right|^{2} \,\mathrm{d}\xi
+ \int_{\rho < |\xi|} \left| \widehat{\sigma}(-\xi) \right|^{2} \,\mathrm{d}\xi \\
&\leq \frac{4}{3\pi^{2}} (\Vol(\Omega))^{2} (1+k)^{4} k^{3} \varepsilon^{2}
+ \frac{1}{8\pi^{3}} (\Vol_{2}(\Omega))^{2} (1+k)^{4} \frac{D^{2}}{\mathcal{E}^{2}} \big(e^{\frac{\mathcal{E}}{2}}-e^{-\frac{\mathcal{E}}{2}}\big)^{2} \varepsilon^{2} \int_{2k < |\xi| \leq \rho} \,\mathrm{d}\xi
+ \frac{M_{1}^{2}}{1+\rho^{2}}.
\end{aligned}
\end{equation*}
We have
\begin{equation*}
\begin{aligned}
\int_{2k < |\xi| \leq \rho } \,\mathrm{d}\xi
&= \frac{4\pi}{3} \big( \rho^{3} - (2k)^{3} \big)
= \frac{4\pi}{3} \Big( \big(\frac{\mathcal{E}^{2}}{D^{2}}+4k^{2}\big)^{3/2} - (2k)^3 \Big) \\
&= \frac{4\pi}{3} \frac{\mathcal{E}^{3}}{D^{3}} \Big( \big(1+(2D\frac{k}{\mathcal{E}})^{2}\big)^{3/2} - (2D\frac{k}{\mathcal{E}})^{3} \Big)
~\leq~ 2\pi \frac{\mathcal{E}^{3}}{D^{3}} \big(1+(2D)^{2}\big)^{1/2},
\end{aligned}
\end{equation*}
due to the mean value theorem applied to the function $t^{\frac{3}{2}}$ and the assumption that $k \leq \mathcal{E}$. Indeed, we can use that by the mean value theorem (with some $0 < t^{*} < t$)
\begin{equation*}
(1+t)^{\frac{3}{2}} - t^{\frac{3}{2}}
= \frac{3}{2} (1+t^{*})^{\frac{1}{2}}
\leq \frac{3}{2} (1+t)^{\frac{1}{2}}
\end{equation*}
and let $t = (2D\frac{k}{\mathcal{E}})^{2}$. Summing up
\begin{equation}
\label{sigmanorm_b}
\|\sigma\|^{2}_{(0)}(\Omega)
\leq \frac{4}{3\pi^{2}} (\Vol(\Omega))^{2} (1+\mathcal{E})^{4} \mathcal{E}^{3} \varepsilon^{2}
+ \frac{1}{4\pi^{2}} (\Vol_{2}(\Omega))^{2} \frac{1}{D} \big(1+(2D)^{2}\big)^{1/2} (1+\mathcal{E})^{4} \mathcal{E} \varepsilon
+ \frac{M_{1}^{2}}{1+\frac{\mathcal{E}^{2}}{D^{2}}+4k^{2}}
\end{equation}
where we used that $\big(e^{\frac{\mathcal{E}}{2}}-e^{-\frac{\mathcal{E}}{2}}\big)^{2} \varepsilon^{2} = \big(e^{\mathcal{E}}+e^{-\mathcal{E}}-2\big) \varepsilon^{2} = \big(\varepsilon^{-1}+\varepsilon-2\big) \varepsilon^{2} \leq \varepsilon$, because $0 < \varepsilon < 1$.

Now the bound \eqref{bound1} follows from \eqref{sigmanorm_a} and \eqref{sigmanorm_b}.
\end{proof}

% ===== ===== ===== ===== ===== ===== ===== =====

\subsection{A special TE mode}\label{se_main_TE}

Due to difficulties with a numerical solution of the full three-dimensional Maxwell system, we further consider a special TE mode where $\Omega$ is a cylindrical domain and the conductivity depends only on the transversal variables.

Let $\Omega = \Omega_{2} \times \mathbb{R}$ where $\Omega_{2}$ is a bounded $C^{2}$ domain in $\mathbb{R}^{2}$ and $\sigma(x) = \sigma(x',0)$, $x' = (x_{1},x_{2})$. If one seeks for a solution $(E,H)$ such that $E(x) = (0,0,E_{3}(x',0))$, $H(x)= (H_{1}(x',0),H_{2}(x',0),0)$, then the Maxwell system \eqref{system1} is equivalent to
\begin{equation}
\label{system1_eq}
{-\partial_{1} E_{3}} = \i \omega \mu_{0} H_{2}, \quad
\partial_{2} E_{3} = \i \omega \mu_{0} H_{1}, \quad
\partial_{1} H_{2} - \partial_{2} H_{1} = (-\i \omega \epsilon_{0} + \sigma) E_{3} \quad
\textrm{in\ } \Omega_{2},
\end{equation}
or
\begin{equation}
\label{pbI}
{-\Delta} E_{3} = (\omega^{2} \epsilon_{0} \mu_{0} + \i \omega \mu_{0} \sigma) E_{3}, \quad H_{1} = \frac{1}{\i\omega\mu_{0}} \partial_{2} E_{3}, \quad
H_{2} = \frac{-1}{\i\omega\mu_{0}} \partial_{1} E_{3} \quad
\textrm{in\ } \Omega_{2}
\end{equation}
where $\Delta = \partial_{1}^{2}+\partial_{2}^{2}$.

Let $\nu = (\nu',0)$ and $\nu' = (\nu_{1},\nu_{2})$ be the outward unit normal vector of $\partial\Omega_{2}$, since $E \times \nu = (-\nu_{2}E_{3},\nu_{1}E_{3},0)$, $H \times \nu = (0,0,\nu_{2}H_{1}-\nu_{1}H_{2}) = \frac{1}{\i\omega\mu_{0}}(0,0,\partial_{\nu'}E_{3})$, the boundary mapping \eqref{boundarymap} can be reformulated as
\begin{equation*}
\Lambda_{\textrm{TE}}(g) = \frac{1}{\i\omega\mu_{0}} \partial_{\nu'} E_{3} \quad \textrm{on\ } \partial\Omega_{2}
\end{equation*}
where $E_{3}$ is the solution of the Dirichlet boundary value problem
\begin{equation*}
{-\Delta} E_{3} = (\omega^{2} \epsilon_{0} \mu_{0} + \i \omega \mu_{0} \sigma) E_{3} \quad
\textrm{in\ } \Omega_{2}, \qquad
E_{3} = g \quad \textrm{on\ } \partial\Omega_{2}.
\end{equation*}

Similarly to the previous subsection, we again take an asymptotical expansion
\begin{equation*}
E_{3} = E_{3}( ;0) + E_{3}( ;1) + E_{3}( ;2)
\end{equation*}
where $E_{3}( ;0)$ satisfies the unperturbed problem
\begin{equation}
\label{E30}
{-\Delta} E_{3}( ;0) - \omega^{2} \epsilon_{0} \mu_{0} E_{3}( ;0) = 0 \quad
\textrm{in\ } \Omega_{2}, \qquad
E_{3}( ;0) = g \quad
\textrm{on\ } \partial\Omega_{2},
\end{equation}
and $E_{3}( ;1)$ solves the Dirichlet problem
\begin{equation}
\label{E31}
{-\Delta} E_{3}( ;1) - \omega^{2} \epsilon_{0} \mu_{0} E_{3}( ;1) = \i \omega \mu_{0} \sigma E_{3}( ;0) \quad
\textrm{in\ } \Omega_{2}, \qquad
E_{3}( ;1) = 0 \quad
\textrm{on\ } \partial\Omega_{2}.
\end{equation}
The remaining term is denoted by $E_{3}( ;2)$ which is comparably small under a high frequency $\omega$.

The linearised boundary mapping in the TE mode is defined by
\begin{equation*}
\Lambda'_{\textrm{TE}} \, g = \frac{1}{\i\omega\mu_{0}} \partial_{\nu'} E_{3}( ;1) \quad
\textrm{on\ } \partial\Omega_{2}.
\end{equation*}
Similar to the above subsection, we again introduce an "adjoint" solution to the unperturbed equation
\begin{equation}
\label{E30*}
{-\Delta} E^{*}_{3}( ;0) - \omega^{2} \epsilon_{0} \mu_{0} E^{*}_{3}( ;0) = 0 \quad
\textrm{in\ } \Omega_{2}.
\end{equation}
From the Green's formula and \eqref{E31}, \eqref{E30*} we have
\begin{equation}
\label{hatsigma2}
- \int_{\Omega_{2}} \sigma E_{3}( ;0) E^{*}_{3}( ;0) \,\mathrm{d}x'
= \frac{1}{\i\omega\mu_{0}} \int_{\partial\Omega_{2}} \partial_{\nu'} E_{3}( ;1) E^{*}_{3}( ;0) \,\mathrm{d}S'
= \int_{\partial\Omega_{2}} \big( \Lambda'_{\textrm{TE}} E_{3}( ;0) \big) E^{*}_{3}( ;0) \,\mathrm{d}S'.
\end{equation}

We also need the trace theorem for Sobolev spaces, such that
\begin{equation}
\label{trace}
\|E_{3}\|_{(\frac{1}{2})}(\partial\Omega_{2})
\leq C_{2} \|E_{3}\|_{(1)}(\Omega_{2})
\end{equation}
with a constant $C_{2}$ depending on the domain $\Omega_{2}$ and on the choice of the norm in $H^{\frac{1}{2}}(\partial\Omega_{2})$.

Let $\varepsilon_{1}$ be the (operator) norm of $\Lambda'_{\textrm{TE}}$ (from $H^{\frac{1}{2}}(\partial\Omega_{2})$ into $H^{-\frac{1}{2}}(\partial\Omega_{2})$) and noting that wave number $k = \omega\sqrt{\epsilon_{0}\mu_{0}}$. The increasing stability estimate for this TE mode is presented below.

\begin{theorem}
\label{thm2}
Let $\|\sigma\|_{(1)}(\Omega_2) \leq M_{1}$, $\sigma = 0$ on $\partial\Omega_{2}$, and the wave number $k \geq 1$. Furthermore, let $0 < \alpha \leq 1$ and $\varepsilon_{1} < 1$, then there holds
\begin{equation}
\label{bound2C}
\begin{aligned}
\|\sigma\|^{2}_{(0)}(\Omega_{2})
&\leq \frac{1}{\pi} C_{2}^{4} (\Vol_{2}(\Omega_{2}))^{2} \Big( (1-\chi_{k}(\mathcal{E}_{1})) (1+k)^{4} k^{2\alpha}
+ \chi_{k}(\mathcal{E}_{1}) (1+\mathcal{E}_{1})^{4} \mathcal{E}_{1}^{2} \Big) \varepsilon_{1}^{2} \\
&\qquad + \chi_{k}(\mathcal{E}_{1}) \frac{1}{4\pi} C_{2}^{4} D^{2} (1+\mathcal{E}_{1})^{4} \varepsilon_{1} \\
&\qquad + M_{1}^{2} \Big( (1-\chi_{k}(\mathcal{E}_{1})) \frac{1}{1+2\mathcal{E}_{1}^{2\alpha}+2k^{2\alpha}}
+ \chi_{k}(\mathcal{E}_{1}) \frac{1}{1+\frac{\mathcal{E}_{1}^{2}}{D^{2}}+4k^{2}} \Big).
\end{aligned}
\end{equation}
Here $\mathcal{E}_{1} = -\ln\varepsilon_{1}$ and $\chi_{k}(\mathcal{E}_{1}) = 0$ if $\mathcal{E}_{1} < k$ and $\chi_{k}(\mathcal{E}_{1}) = 1$ if $k \leq \mathcal{E}_{1}$.
\end{theorem}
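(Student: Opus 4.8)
The plan is to repeat the proof of Theorem~\ref{thm1} almost verbatim, with three adaptations: the Maxwell CE solutions are replaced by scalar complex exponential solutions of the two-dimensional Helmholtz equations \eqref{E30} and \eqref{E30*}; the identity \eqref{hatsigma} is replaced by \eqref{hatsigma2}; and the duality on $\TH(\partial\Omega)$ is replaced by the $H^{1/2}(\partial\Omega_2)$--$H^{-1/2}(\partial\Omega_2)$ duality together with the trace inequality \eqref{trace}. As in Theorem~\ref{thm1}, after a translation we may assume that $0\in\Omega_2$ and $D=2\sup\{|x'|:x'\in\Omega_2\}$.

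For $\xi\in\mathbb{R}^2\setminus\{0\}$ I would choose an orthonormal basis $\{e_1=\xi/|\xi|,\,e_2\}$ of $\mathbb{R}^2$ and set
\[
\zeta=\frac{|\xi|}{2}e_1+\sqrt{k^2-\frac{|\xi|^2}{4}}\,e_2,\qquad
\zeta^{*}=\frac{|\xi|}{2}e_1-\sqrt{k^2-\frac{|\xi|^2}{4}}\,e_2,
\]
with $\sqrt{-y}=\i\sqrt{y}$ for $y>0$, and take $E_3(x';0)=e^{\i\zeta\cdot x'}$, $E^{*}_3(x';0)=-e^{\i\zeta^{*}\cdot x'}$. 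Since $\zeta\cdot\zeta=\zeta^{*}\cdot\zeta^{*}=k^2=\omega^2\epsilon_0\mu_0$ these solve \eqref{E30} and \eqref{E30*}, and since $\zeta+\zeta^{*}=\xi$ one has $E_3(\cdot;0)E^{*}_3(\cdot;0)=-e^{\i\xi\cdot x'}$, so \eqref{hatsigma2} becomes $(2\pi)\widehat{\sigma}(-\xi)=\int_{\partial\Omega_2}(\Lambda'_{\textrm{TE}}E_3(\cdot;0))E^{*}_3(\cdot;0)\,\mathrm{d}S'$. By the definition of $\varepsilon_1$ and the trace inequality \eqref{trace},
\[
(2\pi)|\widehat{\sigma}(-\xi)|
\leq\|\Lambda'_{\textrm{TE}}E_3(\cdot;0)\|_{(-\frac12)}(\partial\Omega_2)\,\|E^{*}_3(\cdot;0)\|_{(\frac12)}(\partial\Omega_2)
\leq\varepsilon_1 C_2^{2}\,\|E_3(\cdot;0)\|_{(1)}(\Omega_2)\,\|E^{*}_3(\cdot;0)\|_{(1)}(\Omega_2).
\]

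I would then bound $\|E_3(\cdot;0)\|_{(1)}(\Omega_2)$ and $\|E^{*}_3(\cdot;0)\|_{(1)}(\Omega_2)$ exactly as the corresponding quantities in the proof of Theorem~\ref{thm1}. For $|\xi|\leq 2k$ the vectors $\zeta,\zeta^{*}$ are real, so both norms are $\leq(1+k)(\Vol_2(\Omega_2))^{1/2}$ and $(2\pi)|\widehat{\sigma}(-\xi)|\leq\varepsilon_1 C_2^{2}(1+k)^2\Vol_2(\Omega_2)$; for $2k<|\xi|$, with $\Xi=\sqrt{|\xi|^2-4k^2}$ one has $|e^{\i\zeta\cdot x'}|^2=e^{-\Xi\,e_2\cdot x'}$, $|e^{\i\zeta^{*}\cdot x'}|^2=e^{\Xi\,e_2\cdot x'}$, and projecting $\Omega_2$ onto the line $\mathbb{R}e_2$ gives $(2\pi)|\widehat{\sigma}(-\xi)|\leq\varepsilon_1 C_2^{2}(1+k)^2 D\int_{-\frac{D}{2}}^{\frac{D}{2}}e^{-\Xi t}\,\mathrm{d}t$. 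From here the argument branches as in Theorem~\ref{thm1}. If $\mathcal{E}_1<k$, apply Parseval's identity with the splitting $|\xi|\leq 2k^{\alpha}$ versus $|\xi|>2k^{\alpha}$, use that the disc $\{|\xi|\leq 2k^{\alpha}\}$ has area $4\pi k^{2\alpha}$ on the low-frequency part, and on the high-frequency part use $\|\sigma\|^2_{(1)}(\Omega_2)\geq(1+4k^{2\alpha})\int_{|\xi|>2k^{\alpha}}|\widehat{\sigma}(-\xi)|^2\,\mathrm{d}\xi$ together with $\mathcal{E}_1<k$; this yields \eqref{bound2C} with $\chi_k(\mathcal{E}_1)=0$. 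If $k\leq\mathcal{E}_1$, split $\mathbb{R}^2$ into $\{|\xi|\leq 2k\}$, $\{2k<|\xi|\leq\rho\}$ and $\{\rho<|\xi|\}$ with $\rho^2=\mathcal{E}_1^2/D^2+4k^2$; on the middle annulus $\Xi\leq\mathcal{E}_1/D$, so monotonicity of $y\mapsto(e^{y}-e^{-y})/y$ gives $\int_{-\frac{D}{2}}^{\frac{D}{2}}e^{-\Xi t}\,\mathrm{d}t\leq D(e^{\mathcal{E}_1/2}-e^{-\mathcal{E}_1/2})/\mathcal{E}_1$, the annulus has area $\pi(\rho^2-4k^2)=\pi\mathcal{E}_1^2/D^2$, and $(e^{\mathcal{E}_1/2}-e^{-\mathcal{E}_1/2})^2\varepsilon_1^2=(\varepsilon_1^{-1}+\varepsilon_1-2)\varepsilon_1^2\leq\varepsilon_1$ since $0<\varepsilon_1<1$; combining this with $\|\sigma\|^2_{(1)}(\Omega_2)\geq(1+\rho^2)\int_{\rho<|\xi|}|\widehat{\sigma}(-\xi)|^2\,\mathrm{d}\xi$ and replacing $(1+k)$ by $(1+\mathcal{E}_1)$ yields \eqref{bound2C} with $\chi_k(\mathcal{E}_1)=1$. (The hypothesis $\sigma=0$ on $\partial\Omega_2$ is what makes the zero extension of $\sigma$ belong to $H^1(\mathbb{R}^2)$, so that $\|\sigma\|^2_{(1)}(\Omega_2)=\int_{\mathbb{R}^2}(1+|\xi|^2)|\widehat{\sigma}(\xi)|^2\,\mathrm{d}\xi$.)

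The routine parts are the verification that the exponentials above solve the unperturbed scalar equations and the tracking of numerical constants (powers of $2\pi$, and the area of a disc replacing the volume of a ball). The step needing real care is the $\|\cdot\|_{(1)}(\Omega_2)$-estimate of the adjoint CE solution in the evanescent regime $|\xi|>2k$: there $|\nabla E^{*}_3(\cdot;0)|$ is governed by $|\zeta^{*}|^2=\tfrac12|\xi|^2-k^2$, which exceeds $k^2$, so the factor $(1+k)$ used above must be understood as the size of $1+|\zeta^{*}|$ on the band $|\xi|\leq\rho$, on which $|\zeta^{*}|$ is comparable to $k+\mathcal{E}_1/D$ and the resulting polynomial prefactor is absorbed into $(1+\mathcal{E}_1)^4$ --- precisely as in the corresponding step of the proof of Theorem~\ref{thm1}. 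The only genuinely new ingredient compared with Theorem~\ref{thm1} is that the trace inequality \eqref{trace} replaces the explicit structure of the $\TH$-norm, which is why the geometric prefactors in \eqref{bound1} collapse to the single constant $C_2^{4}$ in \eqref{bound2C}.
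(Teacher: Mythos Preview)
Your proposal reproduces the paper's proof essentially verbatim: the same scalar CE solutions \eqref{complexsol2}--\eqref{ab2_choice}, the identity \eqref{hatsigma2} combined with the trace inequality \eqref{trace} to get \eqref{2hatsigmabound}, and the identical case split $\mathcal{E}_1<k$ versus $k\le\mathcal{E}_1$ with the annulus $2k<|\xi'|\le\rho_1=(\mathcal{E}_1^2/D^2+4k^2)^{1/2}$ and the area computation $\pi(\rho_1^2-4k^2)=\pi\mathcal{E}_1^2/D^2$. The one place you are more cautious than the paper is the gradient factor in the evanescent regime: the paper simply records $\|E_3(\cdot;0)\|_{(1)}^2(\Omega_2)=(1+k^2)\int_{\Omega_2}e^{\mp\Xi'e_2\cdot x'}\,\mathrm{d}x'$ for $|\xi'|>2k$ and carries the prefactor $(1+k)^4\le(1+\mathcal{E}_1)^4$ through unchanged, so your concern about $|\zeta^{*}|^2=\tfrac12|\xi'|^2-k^2$ does not actually appear in the paper's bookkeeping.
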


\begin{proof}
The proof is similar to that of Theorem \ref{thm1} but we need to construct another type of CE solutions. We use the complex exponential solutions, which is similar to those in \cite{ILX2018}, below
\begin{equation}
\label{complexsol2}
E_{3}(x' ;0) = e^{\i \zeta' \cdot x'}, \quad
E^{*}_{3}(x' ;0) = -e^{\i \zeta'^{*} \cdot x'},
\end{equation}
then \eqref{complexsol2} solves \eqref{E30}, \eqref{E30*} if and only if
\begin{equation}
\label{ab2}
\zeta' \cdot \zeta' = \zeta'^{*} \cdot \zeta'^{*} = k^{2} = \omega^{2} \epsilon_{0} \mu_{0}.
\end{equation}
Let $\xi' \in \mathbb{R}^{2}$, and $\{ e_{1} = \frac{\xi'}{|\xi'|}, e_{2} \}$ be an orthonormal basis in $\mathbb{R}^{2}$ and
\begin{equation}
\label{ab2_choice}
\zeta' = \frac{|\xi'|}{2} e_{1} + \sqrt{k^{2}-\frac{|\xi'|^{2}}{4}} e_{2}, \quad
\zeta'^{*} = \frac{|\xi'|}{2} e_{1} - \sqrt{k^{2}-\frac{|\xi'|^{2}}{4}} e_{2}.
\end{equation}
Then \eqref{ab2} is satisfied and hence \eqref{complexsol2} solve \eqref{E30}, \eqref{E30*}.

In the two dimensional case
\begin{equation*}
\widehat{\sigma}(\xi')
= (2\pi)^{-1} \int_{\mathbb{R}^{2}} \sigma(x') \, e^{-\i \xi' \cdot x'} \,\mathrm{d}x',
\end{equation*}
so from \eqref{hatsigma2}, \eqref{complexsol2}, \eqref{ab2_choice} we yield
\begin{equation*}
2\pi \,\widehat{\sigma}(-\xi')
= - \int_{\Omega_{2}} \sigma E_{3}( ;0) E^{*}_{3}( ;0) \,\mathrm{d}x'
= \int_{\partial\Omega_{2}} \big( \Lambda'_{\textrm{TE}} E_{3}( ;0) \big) E^{*}_{3}( ;0) \,\mathrm{d}S'.
\end{equation*}
Now using the definition of the operator norm $\varepsilon_{1}$ we get
\begin{equation}
\label{2hatsigmabound}
\begin{aligned}
2\pi \left| \widehat{\sigma}(-\xi') \right|
&\leq \varepsilon_{1} \|E_{3}( ;0)\|_{(\frac{1}{2})}(\partial\Omega_{2}) \, \|E^{*}_{3}( ;0)\|_{(\frac{1}{2})}(\partial\Omega_{2}) \\
&\leq \varepsilon_{1} C_{2}^{2} \|E_{3}( ;0)\|_{(1)}(\Omega_{2}) \, \|E^{*}_{3}( ;0)\|_{(1)}(\Omega_{2})
\end{aligned}
\end{equation}
because of \eqref{trace}.

Using \eqref{complexsol2}, \eqref{ab2} and \eqref{ab2_choice}, we further obtain
\begin{equation}
\label{2Enorm}
\begin{aligned}
&\|E_{3}( ;0)\|^{2}_{(1)}(\Omega_{2})
= \int_{\Omega_{2}} (1+|\zeta'|^{2}) \,\mathrm{d}x'
= (1+k^{2}) \Vol_{2}(\Omega_{2}),
& &\textrm{if\ } |\xi'| \leq 2k, \\
&\|E_{3}( ;0)\|^{2}_{(1)}(\Omega_{2})
= (1+k^{2}) \int_{\Omega_{2}} e^{-\Xi'\, e_{2} \cdot x'} \,\mathrm{d}x',
& &\textrm{if\ } 2k < |\xi'|,
\end{aligned}
\end{equation}
where $\Xi' = \sqrt{|\xi'|^{2}-4k^{2}}$. Similarly,
\begin{equation}
\label{2Enorm*}
\begin{aligned}
&\|E^{*}_{3}( ;0)\|^{2}_{(1)}(\Omega_{2})
= \int_{\Omega_{2}} (1+|\zeta'|^{2}) \,\mathrm{d}x'
= (1+k^{2}) \Vol_{2}(\Omega_{2}),
& &\textrm{if\ } |\xi'| \leq 2k, \\
&\|E^{*}_{3}( ;0)\|^{2}_{(1)}(\Omega_{2})
= (1+k^{2}) \int_{\Omega_{2}} e^{\Xi'\, e_{2} \cdot x'} \,\mathrm{d}x',
& &\textrm{if\ } 2k < |\xi'|.
\end{aligned}
\end{equation}

Now combining \eqref{2hatsigmabound}, \eqref{2Enorm} and \eqref{2Enorm*} we derive
\begin{equation}
\label{2hatxibound1}
2\pi \left| \widehat{\sigma}(-\xi') \right|
\leq \varepsilon_{1} C_{2}^{2} (1+k)^{2} \Vol_{2}(\Omega_{2}), \quad
\textrm{if\ } |\xi'| \leq 2k,
\end{equation}
and
\begin{equation}
\label{2hatxibound2}
2\pi \left| \widehat{\sigma}(-\xi') \right|
\leq \varepsilon_{1} C_{2}^{2} (1+k)^{2} D \int_{-\frac{D}{2}}^{\frac{D}{2}} e^{-\Xi'\, t'} \,\mathrm{d}t', \quad
\textrm{if\ } 2k < |\xi'|.
\end{equation}

We first consider the case \textbf{a)}: $\mathcal{E}_{1} < k$. We have
\begin{equation}
\label{2sigmanorma}
\begin{aligned}
\|\sigma\|^{2}_{(0)}(\Omega_{2})
&= \int_{|\xi'| \leq 2k^{\alpha}} \left| \widehat{\sigma}(-\xi') \right|^{2} \,\mathrm{d}\xi'
+ \int_{2k^{\alpha} < |\xi'|} \left| \widehat{\sigma}(-\xi') \right|^{2} \,\mathrm{d}\xi' \\
&\leq \frac{1}{(2\pi)^{2}} C_{2}^{4} (\Vol_{2}(\Omega_{2}))^{2} (1+k)^{4} \pi 4 k^{2\alpha} \varepsilon_{1}^{2}
+ \frac{M_{1}^{2}}{1+(2k^{\alpha})^{2}} \\
&\leq \frac{1}{\pi} C_{2}^{4} (\Vol_2(\Omega_{2}))^{2} (1+k)^{4} k^{2\alpha} \varepsilon_{1}^{2}
+ \frac{M_{1}^{2}}{1+2\mathcal{E}_{1}^{2\alpha}+2k^{2\alpha}}
\end{aligned}
\end{equation}
due to \eqref{2hatxibound1} and to the assumption that $\mathcal{E}_{1} < k$.

To handle the case \textbf{b)}: $k \leq \mathcal{E}_{1}$, we let $\rho_{1}^{2} = \frac{\mathcal{E}_{1}^{2}}{D^{2}} + 4k^{2}$. Since the function $\frac{e^{y}-e^{-y}}{y}$ is increasing when $y > 0$, we have
\begin{equation}
\label{2Xiestimate}
\int_{-\frac{D}{2}}^{\frac{D}{2}} e^{-\Xi'\, t'} \,\mathrm{d}t'
= \frac{e^{\frac{1}{2}D \Xi'}-e^{-\frac{1}{2}D \Xi'}}{\Xi'}
\leq D \frac{e^{\frac{\mathcal{E}_{1}}{2}}-e^{-\frac{\mathcal{E}_{1}}{2}}}{\mathcal{E}_{1}}
\end{equation}
when $2k < |\xi'| \leq \rho_{1}$ (and then $\Xi' \leq \frac{\mathcal{E}_{1}}{D}$). Hence as above when deriving \eqref{2sigmanorma}, using \eqref{2hatxibound2}, \eqref{2Xiestimate}, we obtain
\begin{equation*}
\begin{aligned}
\|\sigma\|^{2}_{(0)}(\Omega_{2})
&= \int_{|\xi'| \leq 2k} \left| \widehat{\sigma}(-\xi') \right|^{2} \,\mathrm{d}\xi'
+ \int_{2k < |\xi'| \leq \rho_{1}} \left| \widehat{\sigma}(-\xi') \right|^{2} \,\mathrm{d}\xi'
+ \int_{\rho_{1} < |\xi'|} \left| \widehat{\sigma}(-\xi') \right|^{2} \,\mathrm{d}\xi' \\
&\leq \frac{1}{\pi} C_{2}^{4} (\Vol_{2}(\Omega_{2}))^{2} (1+k)^{4} k^{2} \varepsilon_{1}^{2} \\
&\qquad + \frac{1}{4\pi^{2}} C_{2}^{4} D^{2} (1+k)^{4} \frac{D^{2}}{\mathcal{E}_{1}^{2}} \big(e^{\frac{\mathcal{E}_{1}}{2}}-e^{-\frac{\mathcal{E}_{1}}{2}}\big)^{2} \varepsilon_{1}^{2} \int_{2k < |\xi'| \leq \rho_{1}} \,\mathrm{d}\xi'
+ \frac{M_{1}^{2}}{1+\rho_{1}^{2}}.
\end{aligned}
\end{equation*}
Since $\rho_{1}^{2} = \frac{\mathcal{E}_{1}^{2}}{D^{2}} + 4k^{2}$, there holds
\begin{equation*}
\int_{2k < |\xi'| \leq \rho_{1}} \,\mathrm{d}\xi'
= \pi \big( \rho_{1}^{2} - (2k)^{2} \big)
= \pi \frac{\mathcal{E}_{1}^{2}}{D^{2}}.
\end{equation*}
Simple calculation then shows that when $k \leq \mathcal{E}_{1}$, there holds
\begin{equation}
\label{2sigmanormb}
\|\sigma\|^{2}_{(0)}(\Omega_{2})
\leq \frac{1}{\pi} C_{2}^{4} (\Vol_{2}(\Omega_{2}))^{2} (1+\mathcal{E}_{1})^{4} \mathcal{E}_{1}^{2} \varepsilon_{1}^{2}
+ \frac{1}{4\pi} C_{2}^{4} D^{2} (1+\mathcal{E}_{1})^{4} \varepsilon_{1}
+ \frac{M_{1}^{2}}{1+\frac{\mathcal{E}_{1}^{2}}{D^{2}}+4k^{2}}
\end{equation}
because $\varepsilon_{1} < 1$. The bounds \eqref{2sigmanorma}, \eqref{2sigmanormb} imply \eqref{bound2}.
\end{proof}

Since the trace operator is continuous from $H^{1}(\Omega_{2})$ into $H^{\frac{1}{2}}(\partial\Omega_{2})$, one of natural choices  of a norm is
\begin{equation*}
\|E_3\|_{(\frac{1}{2})}(\partial\Omega_{2}) = \inf \|\tilde{E}_3\|_{(1)}(\Omega_{2})
\end{equation*}
over all $\tilde{E}_3 \in H^{1}(\Omega_{2})$ with $\tilde{E}_3 = E_3$ on $\partial\Omega_{2}$. Then $C_{2} \leq 1$ in \eqref{trace} and Theorem \ref{thm2} implies

\begin{corollary}
\label{corollary}
Let $\|\sigma\|_{(1)}(\Omega_2) \leq M_{1}$, $\sigma = 0$ on $\partial\Omega_{2}$, and the wave number $k \geq 1$. Let $0 < \alpha \leq 1$ and $\varepsilon_{1} < 1$, then the following estimate holds true
\begin{equation}
\label{bound2}
\begin{aligned}
\|\sigma\|^{2}_{(0)}(\Omega_{2})
&\leq \frac{1}{\pi} (\Vol_{2}(\Omega_{2}))^{2} \Big( (1-\chi_{k}(\mathcal{E}_{1})) (1+k)^{4} k^{2\alpha}
+ \chi_{k}(\mathcal{E}_{1}) (1+\mathcal{E}_{1})^{4} \mathcal{E}_{1}^{2} \Big) \varepsilon_{1}^{2} \\
&\qquad + \chi_{k}(\mathcal{E}_{1}) \frac{1}{4\pi} D^{2} (1+\mathcal{E}_{1})^{4} \varepsilon_{1} \\
&\qquad + M_{1}^{2} \Big( (1-\chi_{k}(\mathcal{E}_{1})) \frac{1}{1+2\mathcal{E}_{1}^{2\alpha}+2k^{2\alpha}}
+ \chi_{k}(\mathcal{E}_{1}) \frac{1}{1+\frac{\mathcal{E}_{1}^{2}}{D^{2}}+4k^{2}} \Big).
\end{aligned}
\end{equation}
Here $\mathcal{E}_{1} = -\ln\varepsilon_{1}$ and $\chi_{k}(\mathcal{E}_{1}) = 0$ if $\mathcal{E}_{1} < k$ and $\chi_{k}(\mathcal{E}_{1}) = 1$ if $k \leq \mathcal{E}_{1}$.
\end{corollary}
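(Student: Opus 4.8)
The plan is to apply Theorem~\ref{thm2} with the particular choice of norm on $H^{\frac12}(\partial\Omega_{2})$ displayed just above, namely the quotient norm
\[
\|E_3\|_{(\frac{1}{2})}(\partial\Omega_{2}) = \inf\big\{\|\tilde{E}_3\|_{(1)}(\Omega_{2}): \tilde{E}_3\in H^{1}(\Omega_{2}),\ \tilde{E}_3 = E_3 \textrm{ on }\partial\Omega_{2}\big\},
\]
which is the norm carried by the canonical identification of $H^{\frac12}(\partial\Omega_{2})$ with the quotient $H^{1}(\Omega_{2})/H^{1}_{0}(\Omega_{2})$. Correspondingly one equips $H^{-\frac12}(\partial\Omega_{2})$ with the dual norm, so that the pairing estimate $\big|\int_{\partial\Omega_{2}}(\Lambda'_{\textrm{TE}}E_{3}( ;0))E^{*}_{3}( ;0)\,\mathrm{d}S'\big|\le\|\Lambda'_{\textrm{TE}}E_{3}( ;0)\|_{(-\frac12)}(\partial\Omega_{2})\,\|E^{*}_{3}( ;0)\|_{(\frac12)}(\partial\Omega_{2})$ used in the proof of Theorem~\ref{thm2} holds with constant one, and $\varepsilon_{1}$ denotes the operator norm of $\Lambda'_{\textrm{TE}}$ between these two spaces.

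The one substantive step is the estimate $C_{2}\le 1$ in \eqref{trace}, and this is immediate for the above norm: any $E_{3}\in H^{1}(\Omega_{2})$ is itself an admissible extension of its own boundary trace, so the infimum defining $\|E_3\|_{(\frac12)}(\partial\Omega_{2})$ is bounded by $\|E_{3}\|_{(1)}(\Omega_{2})$, whence $\|E_3\|_{(\frac12)}(\partial\Omega_{2})\le\|E_{3}\|_{(1)}(\Omega_{2})$, i.e. \eqref{trace} holds with $C_{2}\le 1$.

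It then remains to insert $C_{2}\le 1$, hence $C_{2}^{4}\le 1$, into the bound \eqref{bound2C} of Theorem~\ref{thm2}. All three groups of terms on the right-hand side of \eqref{bound2C} are nonnegative, and the dependence on $C_{2}$ enters only through the prefactors $C_{2}^{4}$ on the first two lines (the $M_{1}^{2}$ line is free of $C_{2}$); replacing each $C_{2}^{4}$ by $1$ therefore only enlarges the right-hand side, and \eqref{bound2C} reduces, term by term, to \eqref{bound2}. I do not anticipate any real obstacle here: the only point worth watching is that the $H^{-\frac12}(\partial\Omega_{2})$ norm be taken dual to the chosen quotient norm, precisely so that the duality step in the proof of Theorem~\ref{thm2} is used with no loss of constant; everything else is the bookkeeping just indicated, with Theorem~\ref{thm2} supplying \eqref{bound2C} itself.
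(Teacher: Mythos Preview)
Your proposal is correct and follows exactly the paper's own argument: choose the quotient norm on $H^{\frac12}(\partial\Omega_{2})$ so that $C_{2}\le 1$ in \eqref{trace}, and then read off \eqref{bound2} from \eqref{bound2C} by replacing each $C_{2}^{4}$ with $1$. Your remark about equipping $H^{-\frac12}(\partial\Omega_{2})$ with the dual norm is a sensible clarification that the paper leaves implicit.
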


We comment on increasing stability estimates in Theorems \ref{thm1} and \ref{thm2}. As one can observe, these estimates contain three terms. The first one is either a Lipschitz term whose constant grows polynomially with respect to the wave number $k$ or a sub-Lipschitz term associated with a logarithmic factor $\mathcal{E} = -\ln\varepsilon$. The second item is a H\"{o}lder term over $\varepsilon$ associated with another logarithmic factor $\mathcal{E}$. The last item is a negative logarithmic term $O(1/\mathcal{E}^{2\alpha})$, $0< \alpha \leq 1$ which commonly arises in the stability estimate of elliptic inverse boundary value problems. Nevertheless, this last term also has another order of $O(1/(\mathcal{E}^{2\alpha}+k^{2\alpha}))$, $0< \alpha \leq 1$ which may provide a better error bound if the wave number $k$ is large. Similar increasing stability estimate appears also in the linearised Schr\"odinger potential problem for the acoustic wave equation as shown in \cite{ILX2018}.

% ===== ===== ===== ===== ===== ===== ===== =====

\section{Numerical aspects for the linearised inverse problem}\label{se_numer}

Our main goal in this section is to propose a reconstruction algorithm and present some numerical evidences confirming the derived increasing stability in Section \ref{se_main}.

\subsection{Numerical reconstruction algorithm}

The Fourier based reconstruction algorithm, to be proposed below, relies on the equality \eqref{hatsigma} where the linearised boundary mapping $\Lambda'$ provides the boundary data
\begin{align*}
\Lambda' : E( ;0) \times \nu |_{\partial\Omega} \mapsto H( ;1) \times \nu |_{\partial\Omega}.
\end{align*}
Noticing that in the first-order subproblem \eqref{system3}, $H( ;1)$ depends on the unknown conductivity $\sigma(x)$, we shall approximate its boundary value in the form of
\begin{align*}
H( ;1) \times \nu |_{\partial\Omega} \approx \left( H \times \nu - H( ;0) \times \nu \right)|_{\partial\Omega}.
\end{align*}
Such a linearised approximation has been widely implemented in inverse conductivity problems, for instance in \cite{DS1994} and very recently in \cite{ILX2018}. After this preparation, we could present the reconstruction algorithm for the linearised Maxwell system and aim at recovering Fourier modes of the conductivity $\sigma(x)$. This algorithm firstly chooses several discrete sets of lengths and angles of the vectors in the phase space. For instance, we choose a discrete and finite length set
\begin{align*}
\{\kappa_{\ell}\}_{\ell=1}^{M} \subset (0, \mathcal{K} \,]
\quad \textrm{for any fixed wave number \ } k = \omega \sqrt{\mu_{0} \epsilon_{0}}.
\end{align*}
Here $\mathcal{K}$ is the maximum length of the modulus $|\xi|$, whose choice will be specified later. Choosing three unit vector (or angle) sets
\begin{align*}
\{e^{1}_{s}\}_{s=1}^{N} \subset \mathbb{S}^{n-1}, \quad
\{e^{2}_{s}\}_{s=1}^{N} \subset \mathbb{S}^{n-1}
\quad \textrm{and} \quad \{e^{3}_{s}\}_{s=1}^{N} \subset \mathbb{S}^{n-1},
\end{align*}
satisfying $e^{1}_{s} \cdot e^{2}_{s} = 0$ and $e^{3}_{s} = e^{1}_{s} \times e^{2}_s$, we denote
\begin{align*}
\xi^{\langle \ell;s \rangle} = \kappa_{\ell} \, e^{1}_{s},
\quad
\zeta^{\langle \ell;s \rangle} = \frac{\kappa_{\ell}}{2} e^{1}_{s} + \sqrt{k^{2} - \frac{\kappa_{\ell}^{2}}{4}} \, e^{2}_{s},
\quad
\zeta^{\langle \ell;s \rangle}_{*} = \frac{\kappa_{\ell}}{2} e^{1}_{s} - \sqrt{k^{2} - \frac{\kappa_{\ell}^{2}}{4}} \, e^{2}_{s},
\end{align*}
and
\begin{align*}
&a^{\langle s \rangle} = +e^{3}_{s}, \quad
b^{\langle \ell;s \rangle} = \frac{-1}{\omega \mu_0} \left( \frac{\kappa_{\ell}}{2} e^{2}_{s} - \sqrt{k^2-\frac{\kappa_{\ell}^{2}}{4}} \, e^{1}_{s} \right), \\
&a^{\langle s \rangle}_{*} = -e^{3}_{s}, \quad
b^{\langle \ell;s \rangle}_{*} = \frac{+1}{\omega \mu_0} \left( \frac{\kappa_{\ell}}{2} e^{2}_{s} + \sqrt{k^2-\frac{\kappa_{\ell}^{2}}{4}} \, e^{1}_{s} \right),
\end{align*}
which are vectors (or points) chosen in the phase space, while $\ell = 1,2,\cdots,M$ and $s = 1,2,\cdots,N$. More precisely, the superscript notation $\cdot^{\langle \ell;s \rangle}$ will be referred to a vector $\xi^{\langle \ell;s \rangle}$ with the $\ell$th length $\kappa_{\ell}$ and the $s$th angle $e^{1}_{s}$. To realize the inverse Fourier transform, we choose a numerical quadrature rule by a suitable choice of the weights $w^{\langle \ell;s \rangle}$ according to these points $\xi^{\langle \ell;s \rangle}$.

We summarize our reconstruction algorithm below.
\vspace{10pt}
\hrule\hrule
\vspace{8pt}
{\parindent 0pt \bf Algorithm 1: Reconstruction Algorithm for the Linearised Inverse Conductivity Problem}
\vspace{8pt}
\hrule
\vspace{8pt}
{\parindent 0pt \bf Input:} %
$\{\kappa_{\ell}\}_{\ell=1}^{M}$, %
$\{e^{1}_{s}\}_{s=1}^{N}$, $\{e^{2}_{s}\}_{s=1}^{N}$, $\{e^{3}_{s}\}_{s=1}^{N}$ and %
weights $w^{\langle \ell;s \rangle}$; \\[5pt]%
{\parindent 0pt \bf Output:} %
Reconstructed conductivity $\sigma_{\rm inv}= \sigma^{\langle M+1;1 \rangle}$. \\[-5pt]%
\begin{enumerate}
  \item[1:] $\,$ Set $\sigma^{\langle 1;1 \rangle} := 0$; %
  \item[2:] $\,$ {\bf For} $\ell = 1,2,\cdots,M$ (length~updating) %
  \item[3:] $\,$ \quad {\bf For} $s = 1,2,\cdots,N$ (angle~updating) %
  \item[4:] $\,$ \quad \quad Choose $E( ;0) := a^{\langle s \rangle} \exp \{ \i \zeta^{\langle \ell;s \rangle} \cdot x \}$, $H( ;0) := b^{\langle \ell;s \rangle} \exp \{ \i \zeta^{\langle \ell;s \rangle} \cdot x \}$; %
  \item[5:] $\,$ \quad \quad Measure boundary data $H \times \nu \big|_{\partial\Omega}$ of the Maxwell system \eqref{system1} given boundary data $E(;0) \times \nu \big|_{\partial\Omega}$;
  \item[6:] $\,$ \quad \quad Calculate approximated linearised boundary data $H \times \nu - H(;0) \times \nu$ on the boundary $\partial\Omega$;
  \item[7:] $\,$ \quad \quad Choose $E^{*}( ;0) := a^{\langle s \rangle}_{*} \exp \{ \i \zeta^{\langle \ell;s \rangle}_{*} \cdot x \}$, $H^{*}( ;0) := b^{\langle \ell;s \rangle}_{*} \exp \{ \i \zeta^{\langle \ell;s \rangle}_{*} \cdot x \}$; %
  \item[8:] $\,$ \quad \quad Compute $\widehat{\sigma}(-\xi^{\langle \ell;s \rangle}) \approx -(a^{\langle s \rangle} \cdot a^{\langle s \rangle}_{*})^{-1} \int_{\partial \Omega} ( H \times \nu - H( ;0) \times \nu ) \cdot E^{*}( ;0) \,\mathrm{d}S$; %
  \item[9:] $\,$ \quad \quad Update $\sigma^{\langle \ell;s+1 \rangle} = \sigma^{\langle \ell;s \rangle} + w^{\langle \ell;s \rangle} \widehat{\sigma}(-\xi^{\langle \ell;s \rangle}) \, \exp \{ \i \xi^{\langle \ell;s \rangle} \cdot x \}$; %
  \item[10:] $\,$ \quad {\bf End} %
  \item[11:] $\,$ \quad Set $\sigma^{\langle \ell+1;1 \rangle} := \sigma^{\langle \ell;N+1 \rangle}$; %
  \item[12:] $\,$ {\bf End}. %
\end{enumerate}
\vspace{8pt}
\hrule\hrule
\vspace{10pt}

As one can observe, in Algorithm 1, the truncation threshold value $\mathcal{K}$ determines the highest Fourier modes of the reconstructed conductivity. Nevertheless, one can not choose $\mathcal{K}$ arbitrary large since CE solutions would become highly oscillating. Similarly to the investigation of acoustic wave equations in \cite{ILX2018} the choice of $\mathcal{K} = 2k$ is more appropriate to obtain a stable reconstruction and we provide the numerical evidence later.

% ===== ===== ===== ===== ===== ===== ===== =====

\subsection{Numerics of the forward problems and linearised boundary data}

To simplify the numerical calculation, we consider the TE mode in Subsection \ref{se_main_TE} and focus on the electromagnetic fields in a cylindrical domain $\Omega$, when the conductivity $\sigma(x)$ depends only on the transversal variables. By choosing $\Omega = \Omega_{2} \times \mathbb{R}$ and $\sigma(x) = \sigma(x',0)$ with $x' = (x_{1},x_{2})$, we reduce the full three-dimensional Maxwell system \eqref{system1} to a simplified TE mode \eqref{system1_eq}, or more precisely, an uncoupled Helmholtz-type equation \eqref{pbI} for the electric field $E_{3}$ by eliminating the magnetic fields $H_{1}$ and $H_{2}$. In fact, the magnetic fields $H_{1}$ and $H_{2}$ can also be calculated by this simplified TE mode, i.e., the formula \eqref{pbI} which are gradient fields of the electric field $E_{3}$.

\begin{figure}[htb]
\centering
\Large
\begin{tikzpicture}[>=latex,scale=1.0,x={(1,0)},y={(0,1)},z={(-0.3,-0.4)}]
\draw[blue, ->, ultra thick, dashed] (0,0,0) -- (0,-2,0) node[pos=1.0,right] {$a^{*}~(E^{*}( ;0))$};
\fill[gray!20!white,rounded corners=20,fill opacity=0.7] (3.8,0,4.5) -- (-3.2,0,4.5) -- (-3.2,0,-3.7) -- (3.8,0,-3.7) -- cycle;
\draw[->] (0,0,0) -- (0,0,4) node[pos=1.0,left] {$x_{1}$};
\draw[->] (0,0,0) -- (2.5,0,0) node[pos=1.1] {$x_{2}$};
\draw[->] (0,0,0) -- (0,2.5,0) node[pos=1.1] {$x_{3}$};
\draw[->, ultra thick] (0,0,0) -- (3,0,3) node[pos=1.0,right] {$\xi'=|\xi'|e_{1}$};
\draw[->, ultra thick] (0,0,0) -- (3,0,-3) node[pos=1.0,right] {$\xi'_{\perp}=|\xi'|e_{2}$};
\draw[blue, ->, ultra thick] (0,0,0) -- (1,0,2) node[pos=1.0,below] {$\zeta'^{*}$};
\draw[red, ->, ultra thick] (0,0,0) -- (2,0,1) node[pos=1.0,right] {$\zeta'$};
\draw[thick, dashed] (1,0,2) -- (3,0,3) -- (2,0,1);
\draw[red, ->, ultra thick] (0,0,0) -- (0,2,0) node[pos=1.0,right] {$a~(E( ;0))$};
\draw[red, ->, ultra thick] (0,0,0) -- (-1,0,2) node[pos=1.0,left] {$b~(H( ;0))$};
\draw[blue, ->, ultra thick] (0,0,0) -- (2,0,-1) node[pos=1.0,right] {$b^{*}~(H^{*}( ;0))$};
\draw[fill=black] (0,0,0) circle (0.07) node at (-0.3,0.2,0) {\normalsize$O$};
\end{tikzpicture}
\caption{TE mode.}
\label{fig_TE_mode}
\end{figure}

To have a clear view of the numerical setting we provide Figure \ref{fig_TE_mode} illustrating the relation of CE solutions between the TE mode and the full Maxwell system in $\mathbb{R}^{3}$. Indeed, according to the CE solutions \eqref{complexsol} given (\ref{ab_choice}), the fields $E( ;0)$, $E^{*}( ;0)$, $H( ;0)$ and $H^{*}( ;0)$ represent the planar waves traveling in the direction of the wave vectors $\zeta'$ and $\zeta'^{*}$ respectively. The vectors $a$, $a^{*}$ are denoted as the directions of the electric fields $E( ;0)$, $E^{*}( ;0)$, and the vectors $b$, $b^{*}$ are denoted as the directions of the magnetic fields $H( ;0)$, $H^{*}( ;0)$, by the Faraday's Law. In the TE mode, except that the vectors $a$ and $a^{*}$ are along the $x_{3}$ direction, the other vectors lie in the $x_{1}$-$x_{2}$ plane. We emphasize that such a TE mode allows us to reduce the algorithmic complexity and substantially save the computational cost. For similar treatment and further applications, we refer to \cite{R2012}.

We first provide some numerics of the forward problems and validate the performance of the linearised boundary data. In particular, this subsection focuses on Steps 5-6 in Algorithm 1 and especially clarify impacts of the frequency towards the linearised boundary data.

The domain $\Omega_{2}$ is chosen as a disk centred at origin with a radius $0.7$ m, i.e. $\Omega_{2}: = B_{0.7}(0) \subset [{-0.7},0.7]^{2}$. The conductivity $\sigma$ is chosen as
\begin{align}
\label{conductivity}
\sigma(x')
= \big| 3(1-x_{1})^{2} \exp\{-x_{1}^{2}-(x_{2}+1)^{2}\}
- (2 x_{1} - 10 x_{1}^{3} - 10 x_{2}^{5}) \exp\{-x_{1}^{2}-x_{2}^{2}\}
- \frac{1}{3} \exp\{-(x_{1}+1)^{2}-x_{2}^{2}\} \big|.
\end{align}
As displayed in the left panel of Figure \ref{fig_sigma}, this conductivity is non-negative and has several Gaussian-type peaks. The boundary $\partial\Omega_{2}$ is also indicated by the red circle with a radius $0.7$ in the same figure.
Noticing that such a conductivity can be represented in the phase space by the variable $\xi' = (\xi_{1},\xi_{2})$, we thus plot $19$ inclined segments in the middle panel of Figure \ref{fig_sigma} where each star indicates a point $\xi'$ in the phase space satisfying $|\xi'| \leq 50$ with different angles. Near these points, the Fourier modes can be calculated explicitly whose absolute values are displayed in the right panel of Figure \ref{fig_sigma}. We emphasize that each curve there collects the absolute Fourier modes of points corresponding to an inclined segment in the middle panel of Figure \ref{fig_sigma}. These Fourier modes are of importance in current work and Algorithm 1 aims at recovering as many of them as possible, c.f. Step 8 ibid.

\begin{figure}[!htb]
\centering
\includegraphics[width=1.0\textwidth]{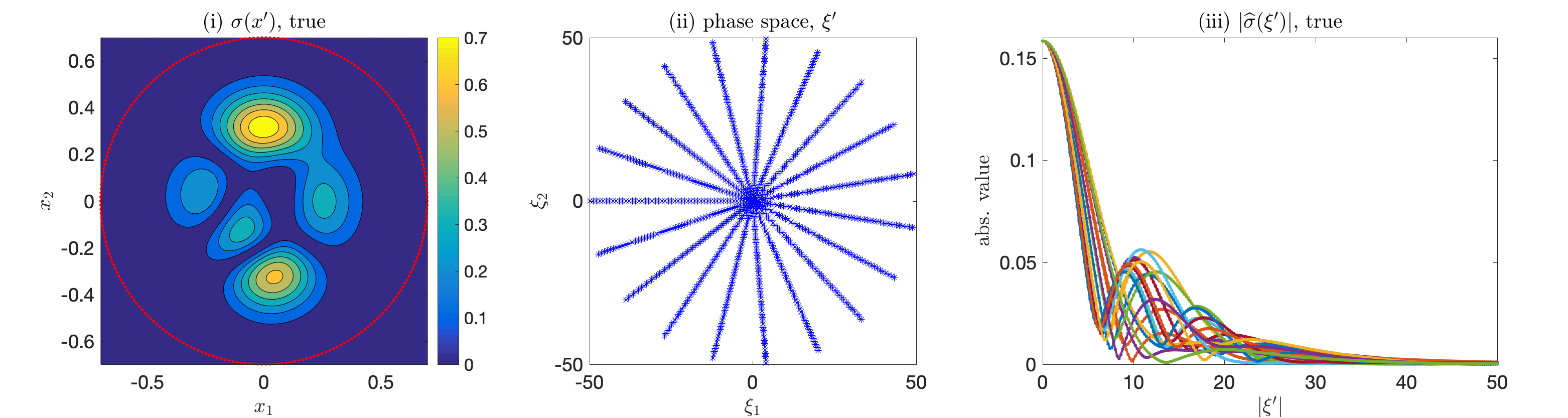}
\caption{Left: the true conductivity $\sigma(x')$. Middle: $19$ inclined segments in the phase space. Right: the associated absolute Fourier modes with respect to $|\xi'|$.}
\label{fig_sigma}
\end{figure}

Below we provide some numerical solutions $(E,H)$ with $E = (0,0,E_{3})$ and $H = (H_{1},H_{2},0)$ of the Maxwell system \eqref{system1_eq} given the conductivity \eqref{conductivity} and one particular incident plane wave with $\xi' = (-1,0)$.
To calculate the electric and magnetic fields induced by the conductivity we take constant electrical permittivity and magnetic permeability to be $\epsilon_{0} = \mu_{0} = 1$ and consequently the wave number $k = \omega$. Considering numerical discretization of the forward problem, we choose a fine grids with $200 \times 200$ equal-distance points for solving the boundary value problem \eqref{pbI} with Dirichlet boundary data in the domain $[{-0.7},0.7]^{2}$.
In fact, it is important to have accurate gradient in such cases, and a second order accurate embedded boundary method of finite difference schemes on an irregular domain, c.f. \cite{GFCK2002,KPY2002,KP2006}, are used for solving the forward problem \eqref{pbI} on the disk $\Omega_{2}$.
To highlight the influence of the frequency, we choose $\omega = 5, 15$ Hz respectively and present the real parts of electric and magnetic fields in Figures \ref{fig_Ew05}-\ref{fig_Ew15}. As one can observe, when the frequency becomes large, i.e. $\omega = 15$, high frequency patterns appear in both the electric and magnetic fields.

\begin{figure}[!htb]
\centering
\includegraphics[width=1.0\textwidth]{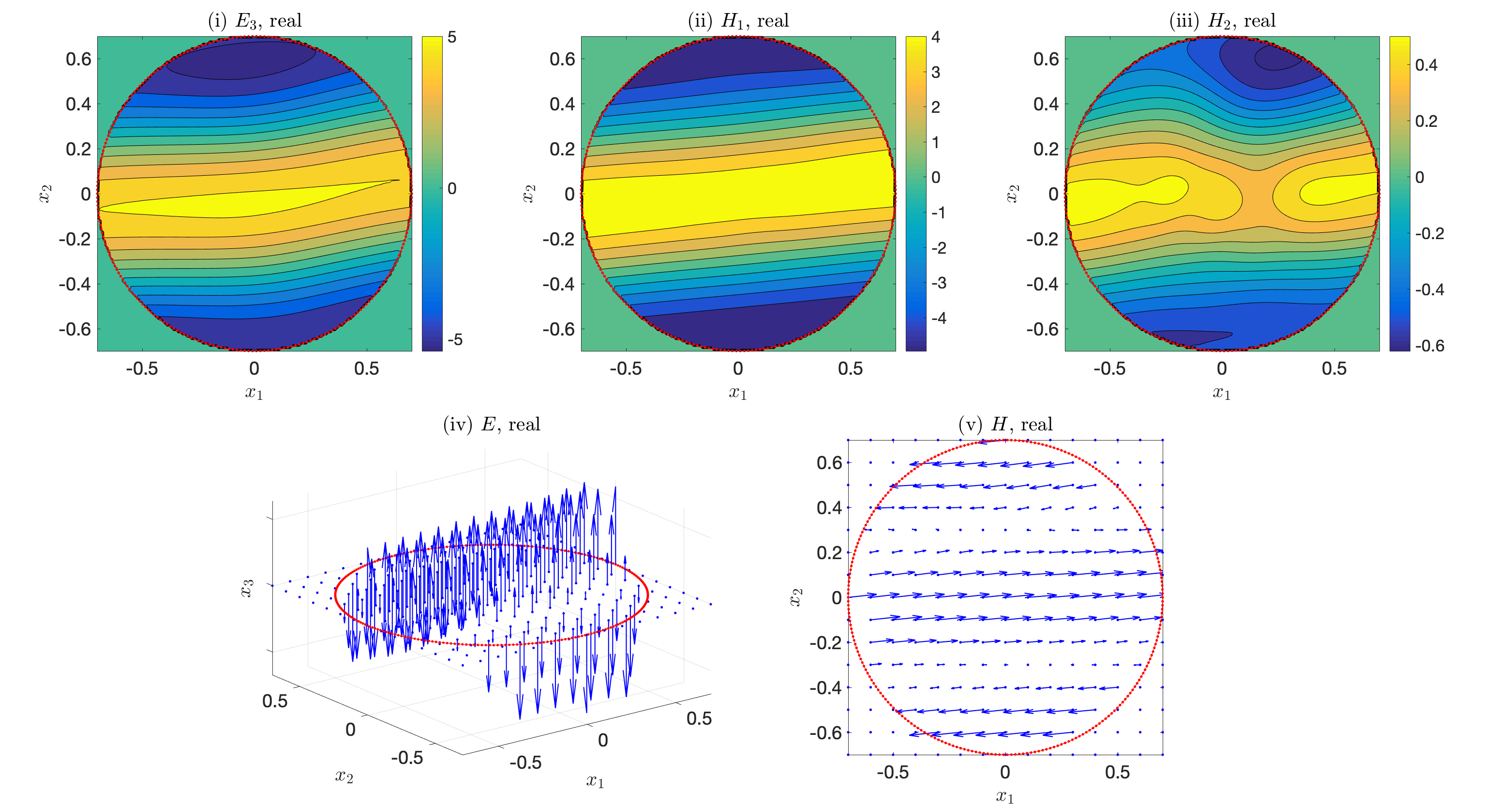}
\caption{[$\xi' = (-1,0)$, $\omega = 5$ Hz] Real parts of fields $E = (0,0,E_{3})$ and $H = (H_{1},H_{2},0)$ satisfying the Maxwell system \eqref{system1_eq} with the conductivity $\sigma$ in (\ref{conductivity}).
Upper row: real parts of $E_{3}$ (V/m), $H_{1}$ (A/m) and $H_{2}$ (A/m) (from left to right).
Bottom row: real parts of $E$ (V/m) and $H$ (A/m) (from left to right).}
\label{fig_Ew05}
\end{figure}

\begin{figure}[!htb]
\centering
\includegraphics[width=1.0\textwidth]{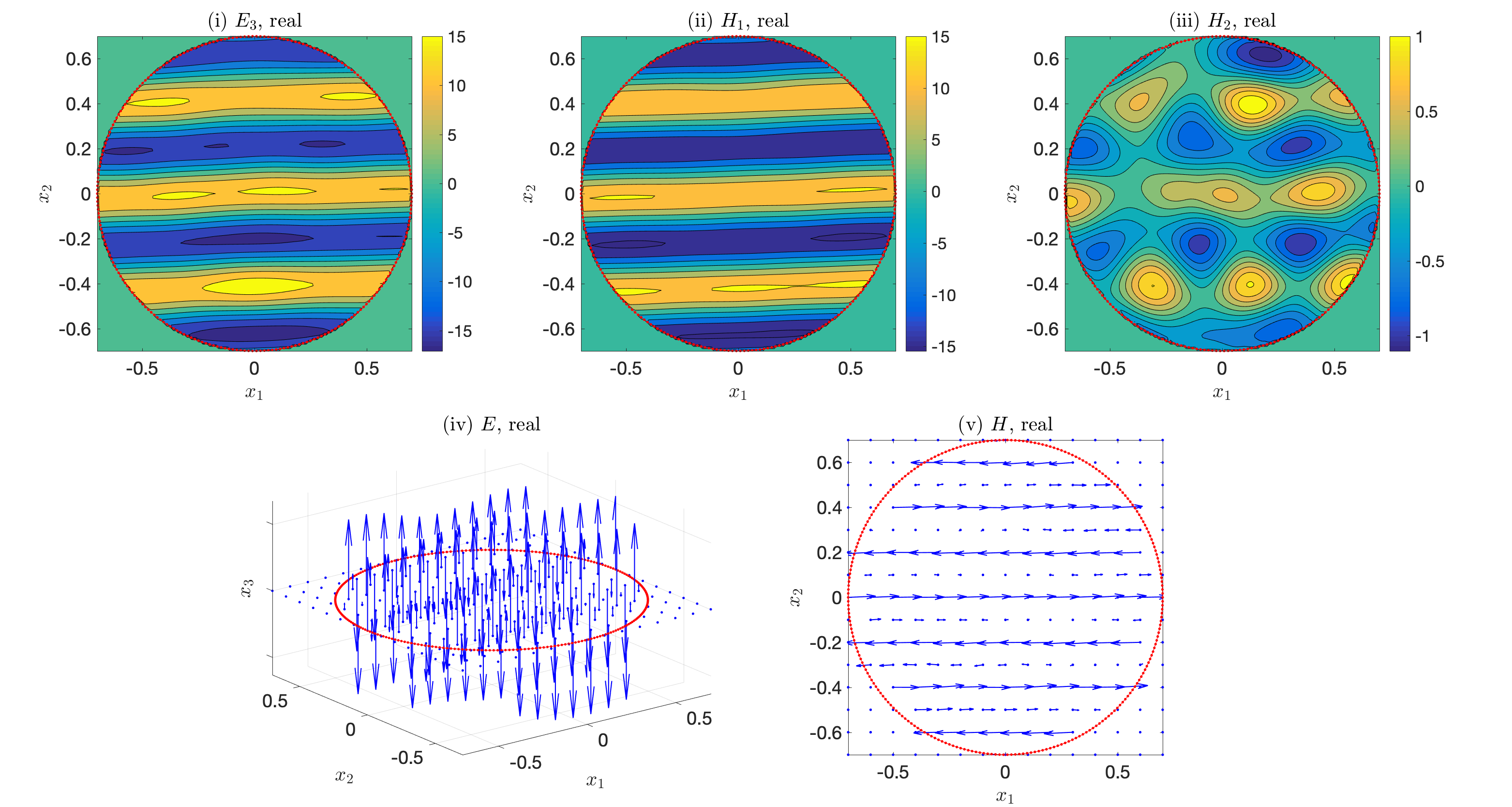}
\caption{[$\xi' = (-1,0)$, $\omega = 15$ Hz] Real parts of fields $E = (0,0,E_{3})$ and $H = (H_{1},H_{2},0)$ satisfying the Maxwell system  \eqref{system1_eq} with the conductivity $\sigma$ in (\ref{conductivity}).
Upper row: real parts of $E_{3}$ (V/m), $H_{1}$ (A/m) and $H_{2}$ (A/m) (from left to right).
Bottom row: real parts of $E$ (V/m) and $H$ (A/m) (from left to right).}
\label{fig_Ew15}
\end{figure}

To further illustrate the difference between the electric field $E_{3}$ and its unperturbed approximation $E_{3}( ;0)$, we present their difference $E_{3} - E_{3}( ;0)$ in Figures \ref{fig_dEHw05}-\ref{fig_dEHw15}, for two choices of the frequency $\omega = 5, 15$. Noticing that the magnetic fields $H_{1}$ and $H_{2}$ obey
\begin{align*}
H_{1} = \frac{1}{\i\omega\mu_{0}} \partial_{2} E_{3}, \quad
H_{2} = \frac{-1}{\i\omega\mu_{0}} \partial_{1} E_{3} \quad
\textrm{in\ } \Omega_{2},
\end{align*}
as well as their unperturbed approximation
\begin{align*}
H_{1}( ;0) = \frac{1}{\i\omega\mu_{0}} \partial_{2} E_{3}( ;0), \quad
H_{2}( ;0) = \frac{-1}{\i\omega\mu_{0}} \partial_{1} E_{3}( ;0) \quad
\textrm{in\ } \Omega_{2},
\end{align*}
we collect the real part of difference $H_{1} - H_{1}(;0)$ and $H_{2} - H_{2}(;0)$ in Figures \ref{fig_dEHw05}-\ref{fig_dEHw15} as well. When the frequency grows, i.e. $\omega = 15$, high frequency patterns essentially appear in the difference of electromagnetic fields which are also reflected on the boundary data. Here, we recall the approximation of $\partial_{\nu'} E_{3}( ;1)$ below
\begin{align*}
\partial_{\nu'} E_{3}( ;1) \approx \partial_{\nu'} E_{3} - \partial_{\nu'} E_{3}( ;0) \quad \textrm{on\ } \partial\Omega_{2},
\end{align*}
which realizes the linearised boundary mapping numerically.
In Figures \ref{fig_dEHw05}-\ref{fig_dEHw15}, we present the real and imaginary parts of $\partial_{\nu'} E_{3} - \partial_{\nu'} E_{3}( ;0)$ on the boundary $\partial\Omega_{2}$ where one can observe its tendency under growing frequencies.

\begin{figure}[!htb]
\centering
\includegraphics[width=1.0\textwidth]{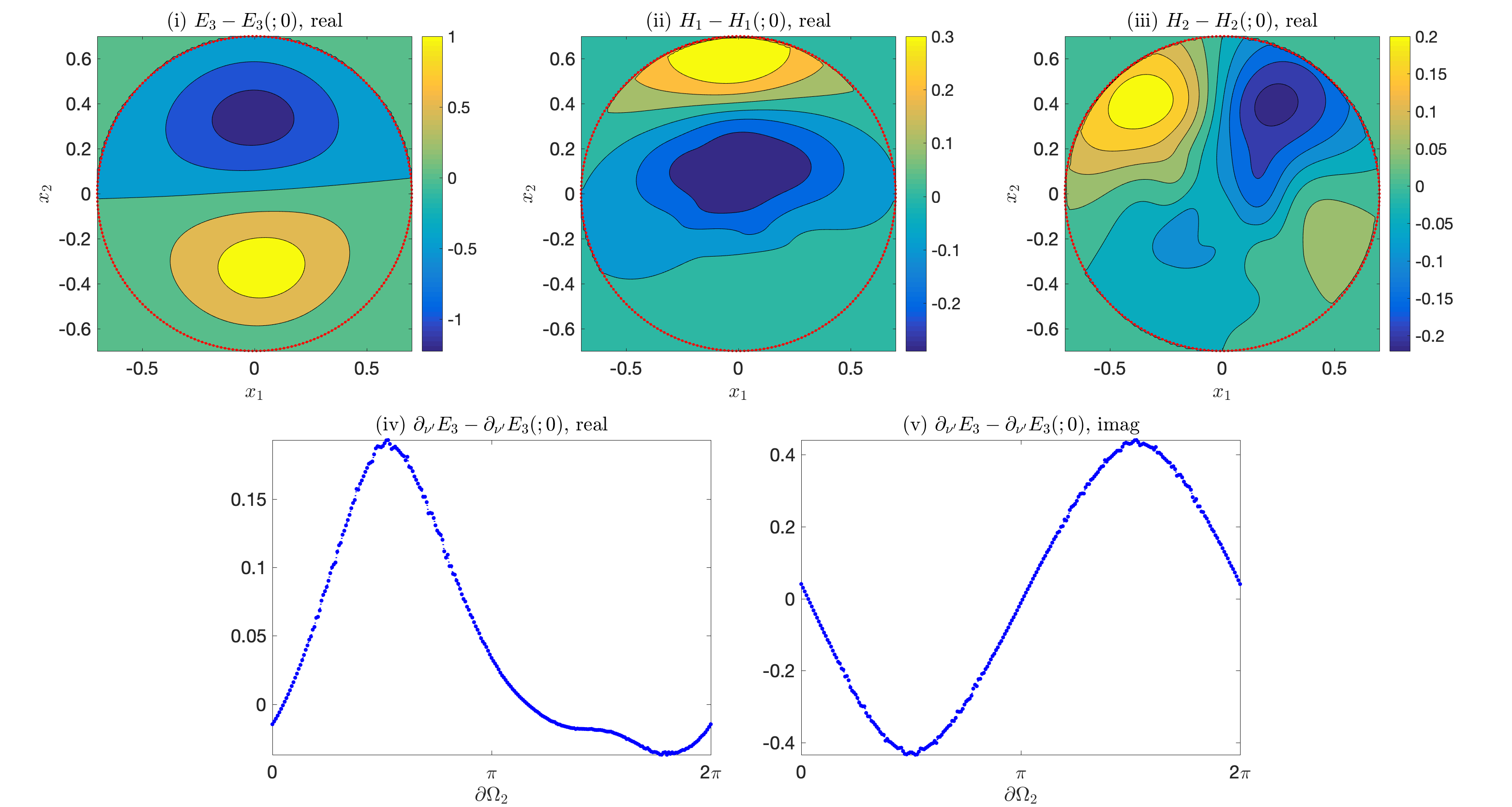}
\caption{[$\xi' = (-1,0)$, $\omega = 5$ Hz]
Upper row: real part of the difference of electromagnetic fields $E_{3} - E_{3}( ;0)$ (V/m), $H_{1} - H_{1}(;0)$ (A/m) and $H_{2} - H_{2}(;0)$ (A/m) (from left to right).
Bottom row: real part (left) and imaginary part (right) of the linearised Neumann boundary data $\partial_{\nu'} E_{3} - \partial_{\nu'} E_{3}( ;0)$ on $\partial\Omega_{2}$.}
\label{fig_dEHw05}
\end{figure}

\begin{figure}[!htb]
\centering
\includegraphics[width=1.0\textwidth]{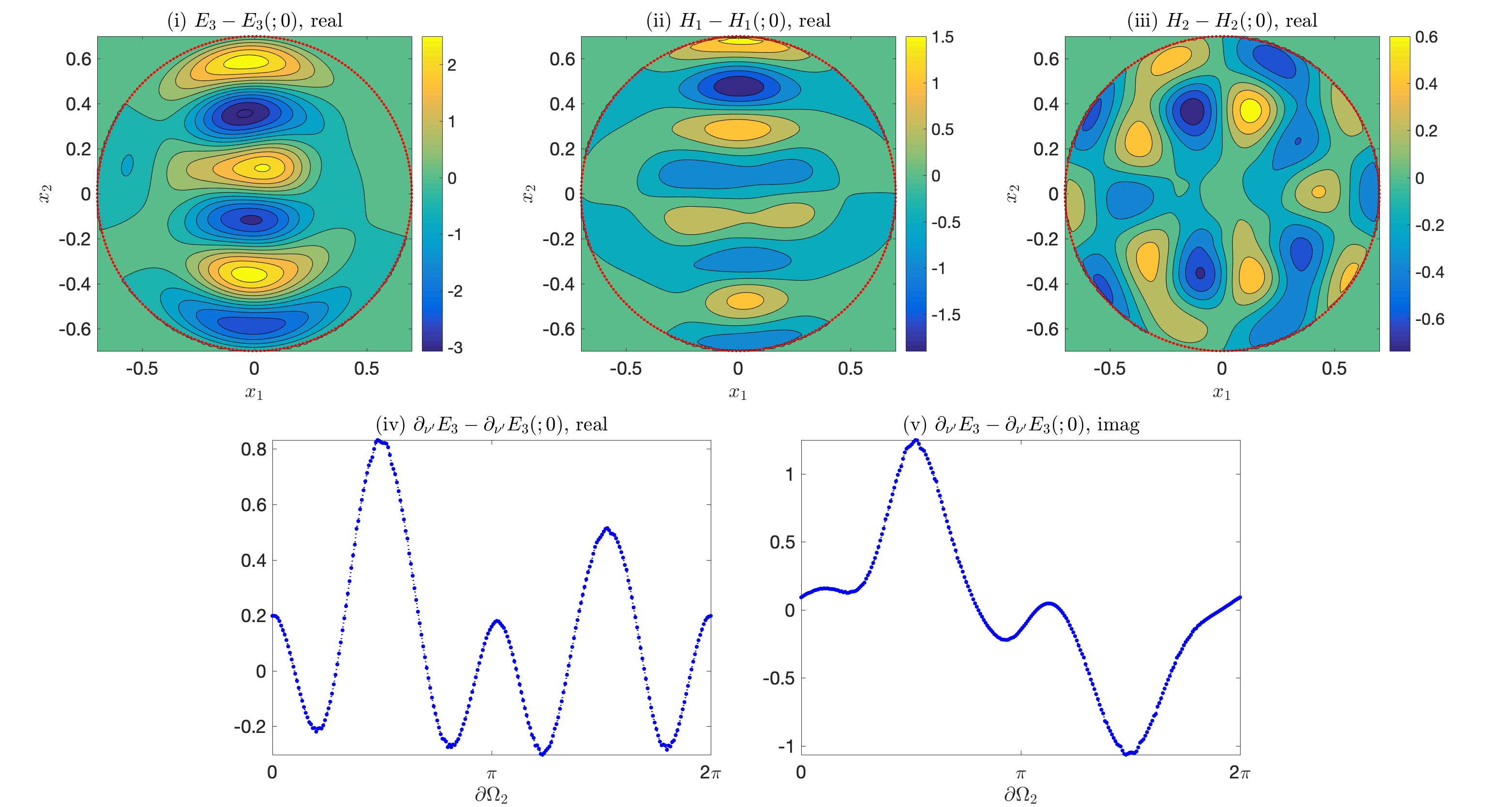}
\caption{[$\xi' = (-1,0)$, $\omega = 15$ Hz]
Upper row: real part of the difference of electromagnetic fields $E_{3} - E_{3}( ;0)$ (V/m), $H_{1} - H_{1}(;0)$ (A/m) and $H_{2} - H_{2}(;0)$ (A/m) (from left to right).
Bottom row: real part (left) and imaginary part (right) of the linearised Neumann boundary data $\partial_{\nu'} E_{3} - \partial_{\nu'} E_{3}( ;0)$ on $\partial\Omega_{2}$.}
\label{fig_dEHw15}
\end{figure}

All the above numerical calculation has chosen the same $\xi' = (-1,0)$ satisfying $|\xi'| < k = \omega$. At the same time, it remains to check the numerical performance for those $|\xi'| > 2k$ (or $2 \omega$) whose CE solutions display differently. To save the length of the paper, we only show the linearised Neumann boundary data $\partial_{\nu'} E_{3} - \partial_{\nu'} E_{3}( ;0)$ in Figure \ref{fig_gtdEH} where $\xi' = (-40,0)$ is chosen for both $\omega = 5$ and $15$. It is clear that the linearised Neumann boundary data blows up on parts of the boundary $\partial\Omega_{2}$. It is worth to mention that such behaviour is also observed in the acoustic Helmholtz equation as shown in \cite{ILX2018}.

\begin{figure}[!htb]
\centering
\,\hfill $\omega = 5$ Hz \hfill\,\\
\includegraphics[width=1.0\textwidth]{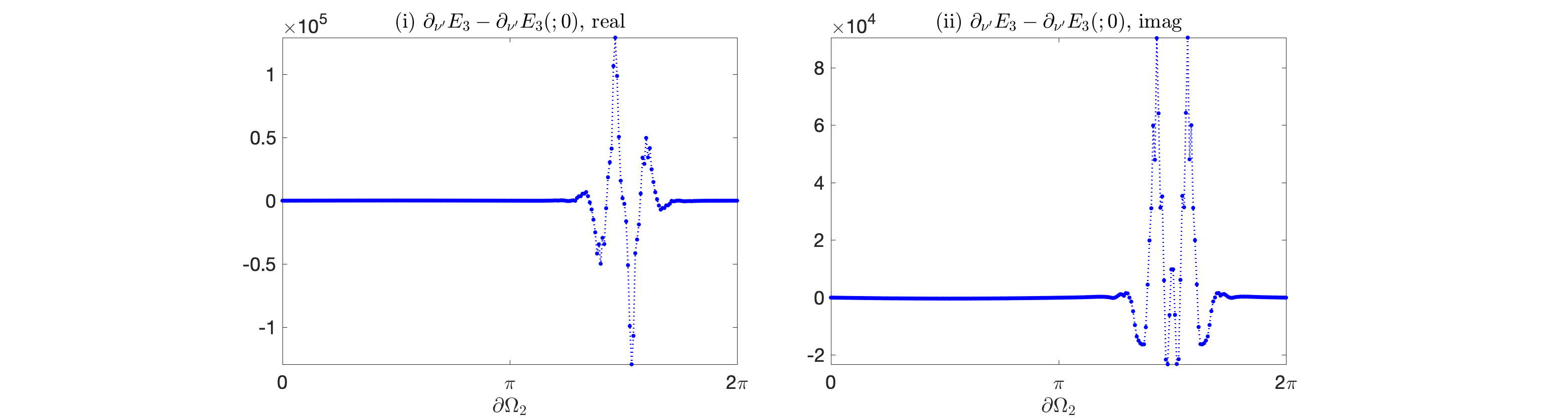}\\
\,\hfill $\omega = 15$ Hz \hfill\,\\
\includegraphics[width=1.0\textwidth]{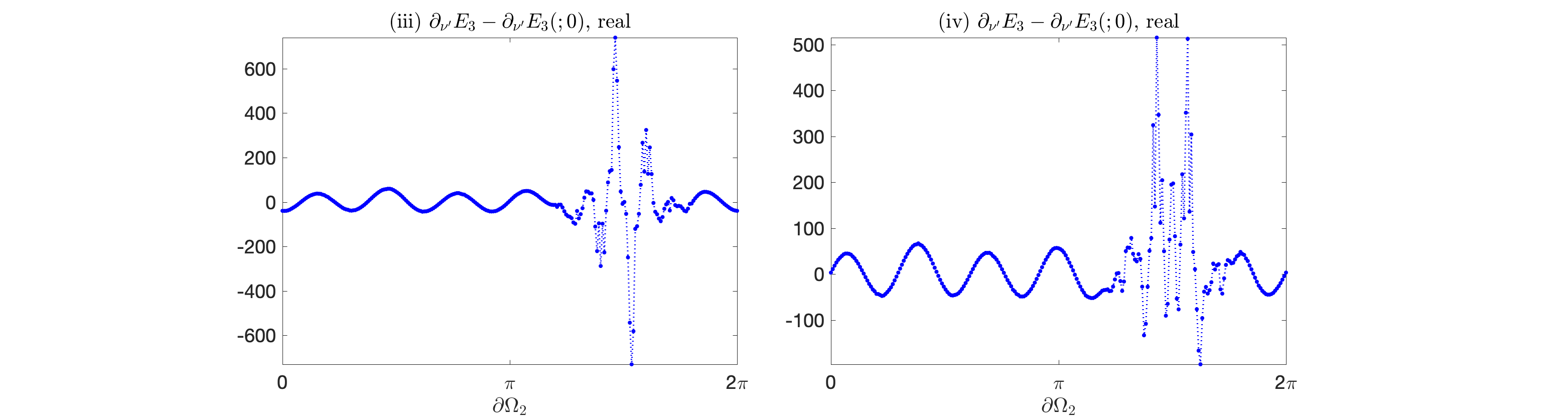}
\caption{[$\xi' = (-40,0)$]
The real part (left) and imaginary part (right) of the linearised Neumann boundary data $\partial_{\nu'} E_{3} - \partial_{\nu'} E_{3}( ;0)$ on $\partial\Omega_{2}$.
Upper row: $\omega = 5$ Hz.
Bottom row: $\omega = 15$ Hz.}
\label{fig_gtdEH}
\end{figure}

% ===== ===== ===== ===== ===== ===== ===== =====

\subsection{Inversion of the conductivity and improving resolution under high frequencies}

By numerical calculation of the forward problems and their approximated linearised boundary data in the above subsection we then confirm efficiency of Algorithm 1, especially Steps 8-9 recovering the Fourier modes of the conductivity $\sigma$. Moreover, we verify the improving resolution by numerical evidence when the frequency $\omega$ grows.
To avoid inverse crimes, in the numerical inversion of current subsection, we choose a coarser grid with $90 \times 90$ equal-distance points in the same rectangle domain $[{-0.7},0.7]^{2}$.

To fit the setting of Algorithm 1 to the TE mode and reconstruct the conductivity from the boundary measurement, we recall that a linearised form of the electric field $E_{3}$ is presented by $E_{3} = E_{3}(;0) + E_{3}(;1) + E_{3}(;2)$, where $E_{3}(;0)$, $E_{3}(;1)$ obey subproblems \eqref{E30}, \eqref{E31} and $E_{3}(;2)$ is the remaining higher-order term.
Recalling the equality \eqref{hatsigma2} below
\begin{align*}
- \int_{\Omega_{2}} \sigma E_{3}( ;0) E^{*}_{3}( ;0) \,\mathrm{d}x'
= \frac{1}{\i\omega\mu_{0}} \int_{\partial\Omega_{2}} \partial_{\nu'} E_{3}( ;1) E^{*}_{3}( ;0) \,\mathrm{d}S'
= \int_{\partial\Omega_{2}} \big( \Lambda'_{\textrm{TE}} E_{3}( ;0) \big) E^{*}_{3}( ;0) \,\mathrm{d}S',
\end{align*}
we need to consider the CE solutions $E_{3}( ;0)$, $E^{*}_{3}( ;0)$ referring to those in the proof of Theorem \ref{thm2}. In fact, by choosing any vector $\xi' = (\xi_{1},\xi_{2}) \in \mathbb{R}^{2}$ and $\xi'_{\perp} = (-\xi_{2},\xi_{1})$ such that $\xi' \cdot \xi'_{\perp} = 0$, $|\xi'| = |\xi'_{\perp}| \leq \mathcal{K}$, and denoting $e_{1} = \frac{\xi'}{|\xi'|}$, $e_{2} = \frac{\xi'_{\perp}}{|\xi'|}$, we then generate the CE solutions in \eqref{complexsol2} %
%\begin{align*}
%E_{3}(x' ;0) = e^{\i \zeta' \cdot x'}, \quad
%E^{*}_{3}(x' ;0) = -e^{\i \zeta'^{*} \cdot x'},
%\end{align*}
with the complex valued vectors in \eqref{ab2_choice} %
%\begin{align*}
%\zeta' = \frac{|\xi'|}{2} e_{1} + \sqrt{k^{2}-\frac{|\xi'|^{2}}{4}} e_{2}, \quad
%\zeta'^{*} = \frac{|\xi'|}{2} e_{1} - \sqrt{k^{2}-\frac{|\xi'|^{2}}{4}} e_{2},
%\end{align*}
and $k = \omega \sqrt{\epsilon_{0} \mu_{0}}$. %
Thus from the linearised boundary mapping \eqref{hatsigma2} we have
\begin{align}
\label{perturb}
\begin{aligned}
2\pi \,\widehat{\sigma}(-\xi')
%&= \int_{\Omega_{2}} \sigma(x') \, e^{\i \xi' \cdot x'} \,\mathrm{d}x' \\
&= - \int_{\Omega_{2}} \sigma E_{3}( ;0) E^{*}_{3}( ;0) \,\mathrm{d}x'
= \frac{1}{\i\omega\mu_{0}} \int_{\partial\Omega_{2}} \partial_{\nu'} E_{3}( ;1) E^{*}_{3}( ;0) \,\mathrm{d}S',
\end{aligned}
\end{align}
where the boundary value $\partial_{\nu'} E_{3}( ;1)$ is needed in the right-hand side. Since the electric field $E_{3}( ;1)$ depends on the unknown conductivity $\sigma$ as shown in \eqref{E31}, its Neumann boundary value $\partial_{\nu'} E_{3}( ;1)$ should be approximated by the linearisation
\begin{align*}
%\label{perturb_approx}
\partial_{\nu'} E_{3}( ;1) \approx \partial_{\nu'} E_{3} - \partial_{\nu'} E_{3}( ;0) \quad \textrm{on\ } \partial\Omega_{2},
\end{align*}
with the electric field $E_{3}$ of the original TE mode and $E_{3}( ;0)$ of the unperturbed subproblem.

Referring to the formula \eqref{perturb}, we recover all the Fourier modes $\widehat{\sigma}(\xi')$ of the conductivity $\sigma$ near those $19$ (discrete) inclined segments in the phase space as shown in the middle panel of Figure \ref{fig_sigma} with $\mathcal{K} = 50$. More precisely, Steps 8-9 in Algorithm 1 iterate over all $\xi'$ along each discrete inclined segment containing the phase points $\xi'$ where their modulus $|\xi'|$ are equally distributed in the interval $[0.2,50]$ with a step size $0.2$. The degree of angles between two adjacent inclined segments is $\frac{2}{19}\pi$.

We shall emphasize that the CE solutions generated in Subsection \ref{se_main_TE} have the same properties as in the acoustic wave equation, c.f. \cite[Rem4.1]{ILX2018}. In particular, when the modulus $|\xi'| > 2k$ (or $2\omega$), the CE solutions become (highly) oscillating along one direction and decay exponentially along the vertical direction as partially shown in Figure \ref{fig_gtdEH}. Then approximation of the linearised boundary data is not stable. To visualize the consequence, we present all the recovered absolute Fourier modes $|\widehat{\sigma}(\xi')|$ for the conductivity $\sigma$ at all phase points $\xi'$ satisfying $|\xi'| \leq 50$ in Figure \ref{fig_absFourier} by implementing Algorithm 1. As one can clearly observe, the Fourier modes with the phase modulus $|\xi'| \leq 2k$ (or $2\omega$) can be well recovered. Nevertheless, when the modulus becomes large i.e. $|\xi'| > 2k$ (or $2\omega$), the absolute value of the recovered Fourier modes blows up immediately and one can not expect to include high phase information. Such an observation consists with the conclusions in \cite{ILX2018} where $\mathcal{K} = 2 k$ is the threshold value allowing stable reconstruction, denoting $\mathcal{K}$ be the maximum modulus length of all chosen vectors in the phase space, and $k = \omega$ while $\epsilon_{0} = \mu_{0} = 1$ in our numerical cases.

\begin{figure}[!htb]
\centering
\includegraphics[width=1.0\textwidth]{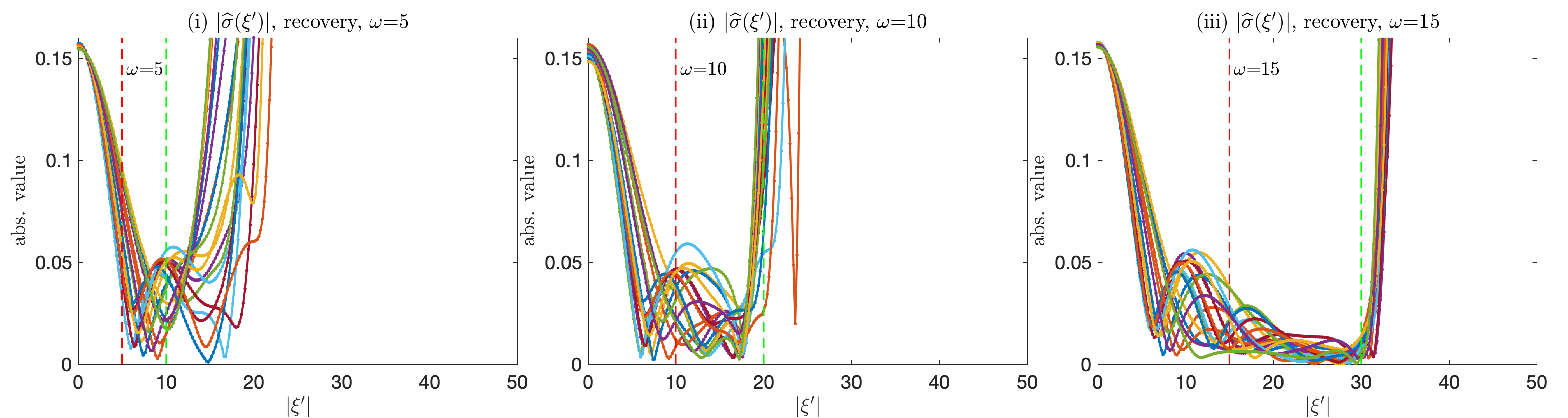}
\caption{Absolute value of recovered Fourier modes with $\omega = 5,10,15$ Hz with $|\xi'| \leq 50$.}
\label{fig_absFourier}
\end{figure}

Then, by utilizing the inverse Fourier transform to the above recovered Fourier coefficients with $\mathcal{K} = 2k$ in Algorithm 1, the reconstructed conductivity $\sigma(x')$ is shown in Figure \ref{fig_sigma_rec} in reference to the exact conductivity in the left panel of Figure \ref{fig_sigma}. As one can observe, in the lower frequency regime $\omega = 5$, no essential information is obtained. On the other hand, if we choose a high frequency regime $\omega = 15$, much higher resolution of the reconstructed conductivity is obtained by using nineteen discrete inclined segments in the phase space. One can further improve the resolution by adding more inclined segments in the phase space and we skip these details.

\begin{figure}[!htb]
\centering
\includegraphics[width=1.0\textwidth]{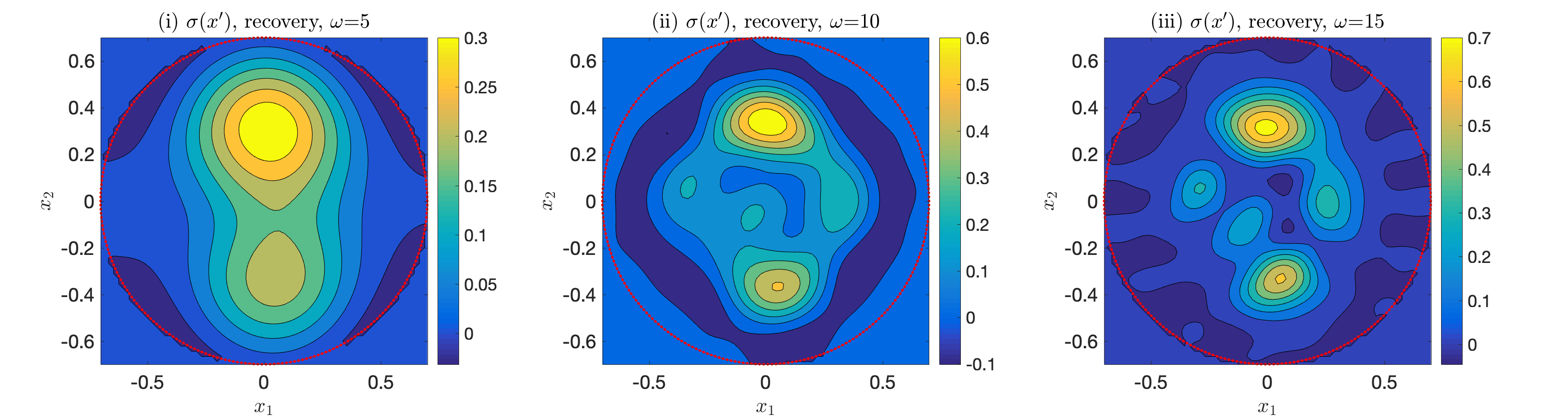}
\caption{Reconstructed conductivity $\sigma$ with $\omega = 5,10,15$ Hz by choosing $\mathcal{K}=2k$ (or $2\omega$).}
\label{fig_sigma_rec}
\end{figure}

Finally, we impose noise propagation on the linearised Neumann boundary data $\partial_{\nu'}E_{3}( ;1) \approx \partial_{\nu'}E_{3} - \partial_{\nu'}E_{3}( ;0)$ on $\partial\Omega_{2}$. Assume that there exists a noise level $\delta$ such that the difference between the exact and noisy boundary data satisfies
\begin{align*}
\left\| \partial_{\nu'}E_{3}^{\delta} - \partial_{\nu'}E_{3} \right\|_{(0)}(\partial\Omega_{2}) \leqslant \delta \left\| \partial_{\nu'}E_{3} \right\|_{(0)}(\partial\Omega_{2}),
\end{align*}
where $\partial_{\nu'}E_{3}^{\delta}$ is the noisy Neumann boundary data.
By choosing different noise levels, we plot the decaying slope of the relative error with respect to the noise level $\delta$ in Figure \ref{fig_noise_slope}. When the frequency is small, for instance $\omega = 5$, one can hardly observe an error bound where the decaying slope is $\delta^{0.00}$. If we increase the frequency to $\omega = 10$ and $\omega = 15$, the decaying slopes grow to $\delta^{0.08}$ and $\delta^{0.20}$.
To have a particular check on the noise propagation towards the inversion resolution, we present the recovered Fourier coefficients and the corresponding reconstructed conductivity in Figure \ref{fig_absFourier_noise} where $0.1$ noise is imposed on the noisy Neumann boundary data with $\omega = 15$. Though the recovered Fourier coefficients become rough when noise appears, the reconstructed conductivity retains good resolution. When the noise increases, the resolution become worse as reflected in Figure \ref{fig_noise_slope}.

\begin{figure}[!htb]
\centering
\includegraphics[width=1.0\textwidth]{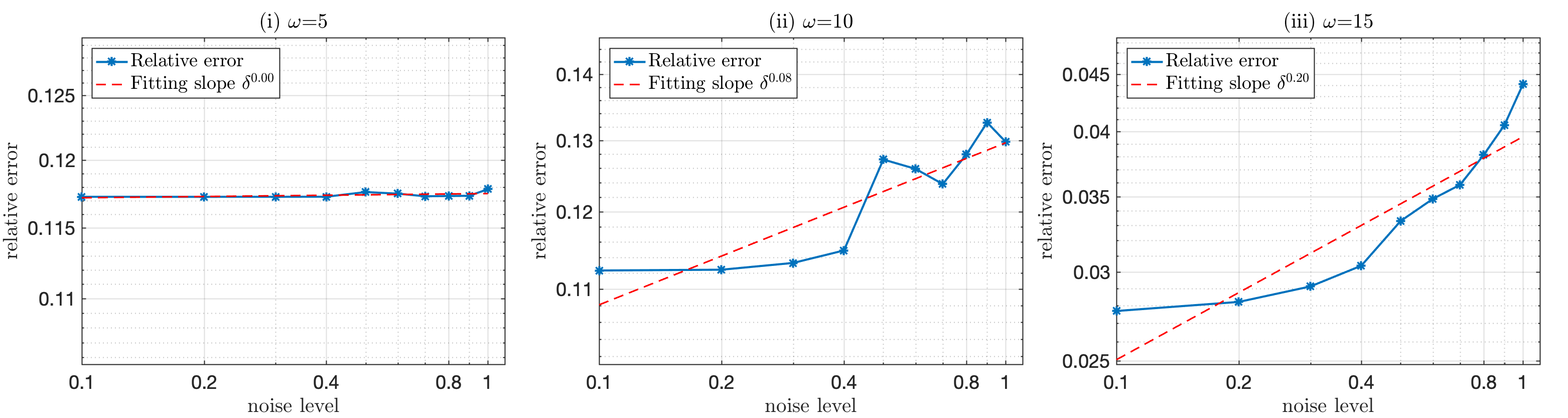}%
\caption{The blue stars are relative error of different noise levels with noisy Neumann boundary data $\partial_{\nu'}E_{3}^{\delta}$. The red dashed line is the fitting line with slopes $\delta^{0.00}$ for $\omega = 5$ Hz, $\delta^{0.08}$ for $\omega = 10$ Hz, $\delta^{0.20}$ for $\omega = 15$ Hz. }
\label{fig_noise_slope}
\end{figure}

\begin{figure}[!htb]
\centering
\includegraphics[width=1.0\textwidth]{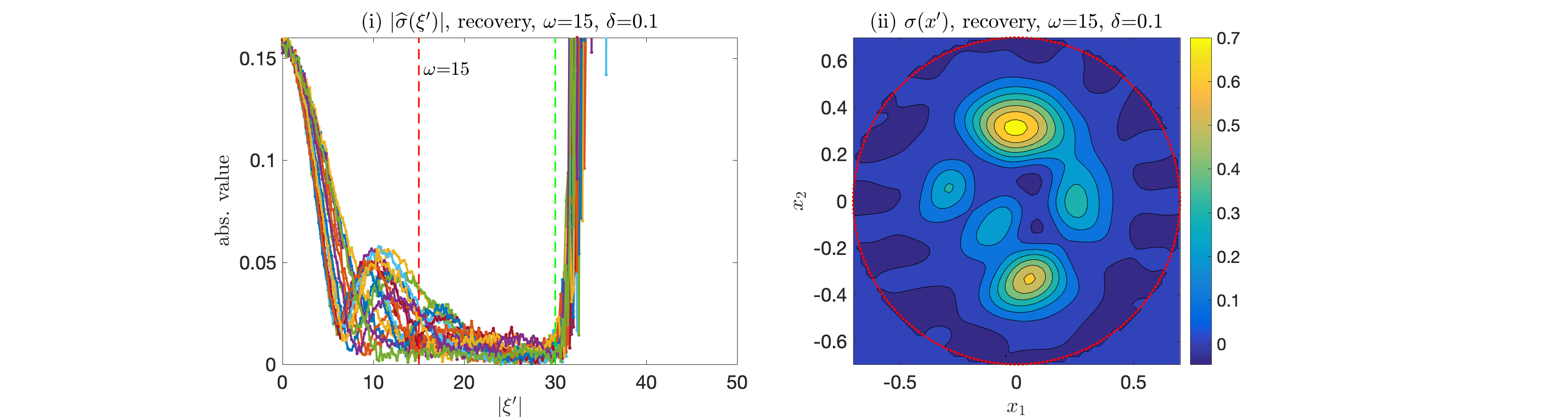}
\caption{[$\omega = 15$ Hz] Absolute value of recovered Fourier coefficients and the corresponding reconstructed conductivity with noise levels $0.1$.}
\label{fig_absFourier_noise}
\end{figure}

% ===== ===== ===== ===== ===== ===== ===== =====

\section*{Conclusion}

We partially justified analytically and justified numerically a linearisation approach in the inverse conductivity problem for the Maxwell system at higher frequencies, where the boundary data for the original (non-linear) inverse problem are used in the linearised problem to get a very good approximation of the conductivity coefficient.
To complete an analytic part one expects to demonstrate linearisation in respect to the frequency. Since the numerical resolution is obviously increasing in the linearised version, it would be interesting to handle the original non-linear inverse problem with some use of the analytic methods of \cite{B2008} and already available numerics in \cite{HLSST}.
The next natural step is to solve numerically the inverse problems for the complete three-dimensional Maxwell system and to confirm Theorem \ref{thm1}. However there are substantial difficulties even with the direct problems when generating the boundary data for the inverse problem.
Another important issue is to show that this method is applicable to practical geophysical and medical settings.

%\section*{Acknowledgements}
%V. Isakov is supported in part by the Emylou Keith and Betty Dutcher Distinguished Professorship and the NSF grants DMS 15-14886 and DMS 20-08154.
%S. Lu is supported by NSFC (No.11925104), Program of Shanghai Academic/Technology Research Leader (19XD1420500) and National Key Research and Development Program of China (No. 2017YFC1404103).
%B. Xu is supported by NSFC (No.11801351) and the Shanghai Pujiang Program (18PJ1403600).

% ===== ===== ===== ===== ===== ===== ===== =====

\bibliographystyle{siamplain}
%\bibliography{references}

\end{document}